\newtheorem{thm}{Theorem}[section] 
\newtheorem{cor}[thm]{Corollary}
\newtheorem{defn}[thm]{Definition}
\newtheorem{lem}[thm]{Lemma}
\newtheorem{prop}[thm]{Proposition}
\newtheorem{rem}[thm]{Remark}
\newcommand\Cref[1]{{Corollary~\ref{#1}}}
\newcommand\ideal[1]{{\left<{#1}\right>}}
\newcommand\sg[1]{{\ideal{#1}}}
\def\isom{{\;\cong\;}}
\def\s{\sigma}
\def\Z {{\mathbb {Z}}}
\newcommand\mul[1]{{#1^{\times}}} 
\newcommand\operA[2]{{\if!#2!\operatorname{#1}\else{\operatorname{#1}_{#2}^{\phantom{I}}}\fi}} \newcommand\Norm[1][]{\operA{N}{#1}}
\newcommand{\set}[1]{{\left\{#1\right\}}}
\long\def\forget#1\forgotten{}
\renewcommand\H[4][!]{{\operatorname{H}^{#2}\!\!\;({#3},{#4}{\if!#1\relax\else(#1)\fi})}}
\DeclareMathOperator{\I}{Im}
\DeclareMathOperator{\K}{Ker}
\DeclareMathOperator{\M}{M}
\DeclareMathOperator{\C}{C}
\DeclareMathOperator{\Cent}{Cent}
\DeclareMathOperator{\Br}{Br}
\DeclareMathOperator{\Res}{res}
\DeclareMathOperator{\X}{X}
\DeclareMathOperator{\G}{G}
\DeclareMathOperator{\Gal}{Gal}
\def\N{{\operatorname{N}}}
\def\Z{\mathbb{Z}}
\def\s{\sigma}
\newcommand\res[1][{}] {{\operatorname{res}_{#1}}}
\newcommand{\Trace}[1][]{\if!#1!\operatorname{Tr}\else{\operatorname{Tr}_{#1}^{\phantom{I}}}\fi} 
\newcommand\book[4]{{{#1},\ {{#2}}{\if!#3!\relax\else{,\ {#3}}\fi}{\if!#4!\relax\else{,\ {#4}}\fi}.}} 
\newif\ifXY 
\begin{document}
\title{Triple Massey products in Galois cohomology}
\author{Eliyahu Matzri}

\address{Department of Mathematics \\
         Ben-Gurion University of the Negev\\
         Be'er-Sheva 84105 \\
         Israel}
\email{elimatzri@gmail.com}
\thanks{The author was supported by the Israel Science Foundation (grant No.\ 152/13) and by the Kreitman foundation.}
\thanks{The author would like to thank I. Efrat for introducing him with this fascinating subject and for many interesting discussions and helpful suggestions.}

\begin{abstract}
Fix an arbitrary prime $p$. Let $F$ be a field, containing a primitive $p$-th root of unity, with absolute Galois group $G_F$.
The triple Massey product (in the mod-$p$ Galois cohomology) is a partially defined, multi-valued function $\langle \cdot,\cdot,\cdot \rangle: H^1(G_F)^3\rightarrow H^2(G_F).$
In this work we prove a conjecture made in \cite{MT} stating that any defined triple Massey product contains zero.
As a result the pro-$p$ groups appearing in \cite{MT} are excluded from being absolute Galois groups of fields $F$ as above.

\end{abstract}

\maketitle
%

\section{Introduction}

We fix a prime number $p$. Let $F$ be a field, which will always be assumed to contain a primitive $p$-th root of unity, $\rho$.
Let $G$ be a profinite group acting trivially on $\mathbb{Z}/p\mathbb{Z}$ and let $H^i(G)$ denote the $i$-th cohomology group $H^i(G,\mathbb{Z}/p\mathbb{Z})$. 
The triple Massey product is a partially defined, multi-valued function $\langle \cdot,\cdot,\cdot \rangle: H^1(G)^3\rightarrow H^2(G)$.
In particular, for $1\leq i\leq 3$, let  $\chi_i\in H^1(G)$, be such that $\chi_1\cup \chi_2=\chi_2\cup \chi_3=0$ in $H^2(G).$
Then one can define a subset, $\langle \chi_1,\chi_2,\chi_3 \rangle\subseteq H^2(G)$, called the triple Massey product of $\chi_1,\chi_2,\chi_3$.
In \cite{MT}, Min\'{a}\v{c} and T\^{a}n define the Vanishing triple Massey product property of $G$, stating that every defined
triple Massey product contains zero. They conjecture that if $G=G_F$, the absolute Galois group of $F$, then for any prime $p$, $G_F$ has this property.
When $p=2$ they prove the conjecture and use it to show certain pro-$2$ groups can not be realizable as absolute Galois groups.
The main objective of this work is to prove their conjecture in its full generality, i.e. for any prime $p$ and any field $F$ as above. This is achieved in Theorem \ref{Main}.
As a result we get a strong restriction on the structure of $G_F$, ruling out certain pro-$p$ groups (along the lines of \cite{MT})
from being realizable as absolute Galois groups.

The Vanishing triple Massey product for $G_F$ was known in the following cases:
\begin{enumerate}
\item In \cite{HW}, for $p=2$, Hopkins and Wickelgren construct an $F$-variety $\X (\chi_1,\chi_2,\chi_3)$ such that
$0\in \langle \chi_1,\chi_2,\chi_3 \rangle\Leftrightarrow \X_F (\chi_1,\chi_2,\chi_3)\neq \emptyset$
and prove that it always has an $F$-point when $F$ is a number field.
\item In \cite{MT}, for $p=2$, Min\'{a}\v{c} and T\^{a}n present an $F$-point of $\X (\chi_1,\chi_2,\chi_3)$ where $F$ is any field.
Thus proving $G_F$ always has the Vanishing triple Massey product property with respect to the prime $p=2$.
Using the work of Dwyer (see \cite{Dwyer75}) they give examples of pro-$p$ groups which do not possess the Vanishing triple Massey product property.
Thus, for $p=2$ they get new examples of profinite groups which can not be realized as the absolute Galois groups of the field $F$.
\item In \cite{MT2} Min\'{a}\v{c} and T\^{a}n prove that if $F$ is a number field, $G_F$ has the Vanishing triple Massey product property with respect to any prime $p$.
\item In \cite{EM}, Efrat and the author connect this property to the Brauer group of $F$. When $p=2$ they show that an old Theorem of Albert (see \cite{Alb}),
strengthened by Rowen (see \cite[Corollary 5]{Row}) implies the Vanishing triple Massey product property for $G_F$, where $F$ is arbitrary. Moreover, when $F$ is a number field they prove
a result about relative Brauer groups of abelian extensions of number fields which implies the Vanishing triple Massey product property for $G_F$ with
 respect to any prime $p$.
\end{enumerate}

Following \cite{EM} we approach the problem from the point of view of the Brauer group of $F$. Namely, we make use of the isomorphism:
\begin{equation}\label{BrH2}
  \nonumber \Psi:H^2(G_F)\overset{\sim}{\longrightarrow} \Br_p(F)
\end{equation}
to translate the problem to a question about specific $\mathbb{Z}/p\mathbb{Z}\times \mathbb{Z}/p\mathbb{Z}$ abelian crossed products and solve it using the theory
of abelian crossed products. In particular, let $\chi_1, \chi_2, \chi_3\in H^1(G_F)$, which
now correspond to classes $[a_1],[a_2],[a_3]\in F^{\times}/(F^{\times})^p$
under the Kummer isomorphism, $$F^{\times}/(F^{\times})^p\cong H^1(G_F).$$
Let $\M$ be any element of $\langle \chi_1,\chi_2,\chi_3 \rangle$. Then we have:
\begin{equation}\label{cond}
 0\in \langle \chi_1,\chi_2,\chi_3 \rangle \Leftrightarrow \Psi(\M)= [(a_1,\alpha)_p] \otimes [(a_3, \beta)_p]
\end{equation}

where $\alpha,\beta \in F^{\times}$ and $[(a_1,\alpha)_p], [(a_3, \beta)_p]$ are the classes in $\Br_p(F)$ of the symbol $F$-algebras
 $(a_1,\alpha)_p$ and $(a_3, \beta)_p.$
Our strategy will be to construct an explicit abelian crossed product $F$-algebra $A$, such that $\Psi(\M)=[A]$ for some element
$\M\in \langle \chi_1,\chi_2,\chi_3 \rangle,$ and prove it satisfies the right-hand side of (\ref{cond}).
The work is organized as follows:
In section $2$ we give some background on general Massey products and start the study of the specific case of triple Massey products.
In section $3$ we give the necessary background on abelian crossed products, construct and study a specific abelian crossed product $A$
which will be used in the proof of the main Theorem.
In section $4$ we connect the two previous sections and show that every element in the triple Massey product corresponds to an abelian crossed product in the Brauer group.
In Section $5$ we study the condition $\chi_a\cup\chi_b=0$ in Galois cohomology in order to better understand the corresponding abelian crossed product from section $4$.
In particular, we build a Galois extension with group $G$, and construct a concrete function $\varphi_{a,b}\in C^1(G_F)$ such that
$\partial(\varphi_{a,b})=\chi_a\cup\chi_b \in C^2(G_F)$.
Section $6$ is devoted to the proof of the Vanishing triple Massey product property of absolute Galois groups.
In particular, we use the function from section $5$ to show $A$ (constructed in section $3$) corresponds to some element in the triple Massey product and use our
study of $A$ in section $3$ to prove the main Theorem.

\section{$n$-fold Massey products}
In this section we give the necessary background on (rank $1$) $n$-fold Massey products $\langle \cdot,\dots,\cdot \rangle : H^1(G)^n\rightarrow H^2(G).$
\subsection{Background}
The main sources for this background subsection are \cite{Dwyer75}, \cite{IE}, \cite{HW} and \cite{Wic}.
Let $R$ be a unital commutative ring. Recall that a differential graded algebra (DGA) over $R$ is a graded $R$-algebra
$$ C^\bullet = \oplus_{k\geq 0}C^k = C^0 \oplus C^1\oplus C^2\oplus ... $$
with product $\cup$, and equipped with a differential $\partial: C^{\bullet} \rightarrow C^{\bullet+1}$~such~that:
\begin{enumerate}
\item $\partial(x\cup y)=\partial(x)\cup y+(-1)^k x\cup \partial(y)$ for $x\in C^k;$
\item $\partial^2=0.$
\end{enumerate}
One then defines the cohomology ring $H^{\bullet}=\K(\partial)/\I(\partial).$

\begin{defn}
Let $c_1,..., c_n\in H^1$. A collection $C = (c_{i,j}), 1 \leq i < j \leq n+1, (i, j) \neq (1, n + 1),$ of elements
of $C^1$ is called a defining system for the $n$-th fold Massey product $\langle c_1, . . . , c_n\rangle$ if the following
conditions hold:
\begin{enumerate}
\item $c_{i,i+1}$ represents $c_i$ for every $1\leq i \leq n$;
\item $\partial(c_{i,j})=\sum_{k=i+1}^{j-1}c_{i,k}\cup c_{k,j}$ for every $ i, j$ as above and $i+1 < j.$
\end{enumerate}
One can then check that $\sum_{k=2}^{n}c_{1,k}\cup c_{k,n+1}$ is in $\K(\partial)$, thus represents an element of $H^2$.
It is called the $n$-fold Massey product with respect to the defining system $C$, denoted $\langle c_1, . . . , c_n\rangle_C$.
Then $$\langle c_1, . . . , c_n\rangle=\{ \langle c_1, . . . , c_n\rangle_C| \ C \hbox{ \ is a defining system} \}.$$
\end{defn}
\begin{rem}
For $n=2$, $\langle c_1, c_2\rangle =\{c_1\cup c_2\}.$
\end{rem}

\subsection{Massey products and unipotent representations}

Let $G$ be a profinite group and let $R$ be a finite commutative ring considered as a trivial
discrete $G$-module. Let $C^{\bullet} = C^{\bullet}(G,R)$ be the DGA of inhomogeneous continuous cochains of $G$ with coefficients in $R$ \cite[Chapter I Section 2]{NSW}.
In \cite{Dwyer75}, Dwyer shows in the discrete context (see also \cite[section 8]{IE} in the profinite case), that defining systems for this DGA can be interpreted in terms
of upper-triangular unipotent representations of $G$, in the following way.

Let $\mathbb{U}_{n+1}(R)$ be the group of all upper-triangular unipotent $(n+1)\times(n+1)$-matrices over $R$. Let $Z_{n+1}(R)$ be the subgroup of all such
matrices with entries only at the $(1,n+1)$ position. We may identify $\mathbb{U}_{n+1}(R)/Z_{n+1}(R)$ with the group
$\bar{\mathbb{U}}_{n+1}(R)$ of all upper-triangular unipotent $(n+1)\times (n+1)$-matrices over $R$ where we remove the $(1, n + 1)$-entry.

For a representation $\varphi: G \rightarrow \mathbb{U}_{n+1}(R)$ and $1 \leq i < j \leq n+1$ let $\varphi_{i,j} : G \rightarrow R$ be the
composition of $\varphi$ with the projection from $\mathbb{U}_{n+1}(R)$ to its $(i, j)$-coordinate; and the same for a representation \\ $\bar{\varphi}: G\rightarrow  \bar{\mathbb{U}}_{n+1}(R).$

\begin{thm}\label{DW}(\cite[Theorem 2.4]{Dwyer75})
Let $c_1,. . . , c_n$ be elements of $H^1(G,R).$ There is a one-one correspondence $C \leftrightarrow \bar{\varphi}_C$ between defining systems $C$
 for $\langle c_1, . . . , c_n\rangle$ and group homomorphisms $\bar{\varphi}_C : G \rightarrow \bar{\mathbb{U}}_{n+1}(R)$ with $\bar{\varphi}_{i,i+1} = −c_i$,
 for $1 \leq i \leq n.$
Moreover, $\langle c_1, . . . , c_n\rangle_C = 0$ if and only if the dotted arrow exists in the following commutative diagram
\[
\xymatrix{
   &   &   &  G \ar@{.>}[ld]\ar[d]^{\bar{\varphi}_C} & \\
1\ar[r] & R \ar[r] & \mathbb{U}_{n+1}(R)\ar[r] & \bar{\mathbb{U}}_{n+1}(R)\ar[r] & 1.
}
\]

\end{thm}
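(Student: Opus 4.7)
The plan is to unpack what it means for $\bar{\varphi}: G \to \bar{\mathbb{U}}_{n+1}(R)$ to be a continuous group homomorphism by reading the identity $\bar{\varphi}(gh) = \bar{\varphi}(g)\bar{\varphi}(h)$ entry by entry, and to check that the resulting equations on the coordinate functions $\bar{\varphi}_{i,j}: G \to R$ coincide with the defining-system conditions once we put $c_{i,j} := -\bar{\varphi}_{i,j}$ (this sign is forced by the hypothesis $\bar{\varphi}_{i,i+1} = -c_i$). The lifting statement will then drop out from exactly the same computation carried out at the one remaining $(1,n+1)$-entry.

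First I would expand the matrix product. Since diagonal entries of a unipotent upper-triangular matrix equal $1$, the $(i,j)$-entry of $\bar{\varphi}(g)\bar{\varphi}(h)$ equals
\[
\bar{\varphi}_{i,j}(g) + \bar{\varphi}_{i,j}(h) + \sum_{k=i+1}^{j-1}\bar{\varphi}_{i,k}(g)\,\bar{\varphi}_{k,j}(h)
\]
for each $(i,j) \neq (1,n+1)$. Using the inhomogeneous differential $(\partial f)(g,h) = f(g) + f(h) - f(gh)$ and the cup product $(u\cup v)(g,h) = u(g)v(h)$, the multiplicativity of $\bar{\varphi}$ is therefore equivalent to
\[
-\partial \bar{\varphi}_{i,j} \;=\; \sum_{k=i+1}^{j-1} \bar{\varphi}_{i,k}\cup \bar{\varphi}_{k,j}
\]
for all admissible $(i,j)$. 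Substituting $c_{i,j} = -\bar{\varphi}_{i,j}$ turns this into $\partial c_{i,j} = \sum_{k=i+1}^{j-1} c_{i,k}\cup c_{k,j}$, with the degenerate case $j=i+1$ reading $\partial c_{i,i+1}=0$, so that $c_{i,i+1}$ is a $1$-cocycle representing $c_i$. The map $\bar{\varphi}\mapsto C$ is thus well defined; the inverse direction just runs the same identity in reverse, building $\bar{\varphi}$ from its entries $-c_{i,j}$ and observing that the displayed equations force $\bar{\varphi}(gh)=\bar{\varphi}(g)\bar{\varphi}(h)$. Continuity is automatic because $R$ is finite and discrete and each $c_{i,j}$ is already a continuous cochain.

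For the vanishing statement, a lift $\varphi: G \to \mathbb{U}_{n+1}(R)$ of $\bar{\varphi}_C$ is precisely the choice of an additional coordinate function $\varphi_{1,n+1}: G \to R$ making $\varphi$ multiplicative; the only new constraint is the $(1,n+1)$-entry of $\varphi(gh)=\varphi(g)\varphi(h)$, which reads
\[
-\partial \varphi_{1,n+1} \;=\; \sum_{k=2}^{n} c_{1,k}\cup c_{k,n+1}.
\]
Hence such a $\varphi$ exists if and only if the $2$-cocycle on the right, whose class is by definition $\langle c_1,\dots,c_n\rangle_C$, is a coboundary, i.e.\ $\langle c_1,\dots,c_n\rangle_C=0$ in $H^2(G,R)$. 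The only real obstacle is bookkeeping: keeping the sign conventions for $\partial$, $\cup$, and the matrix product mutually consistent, and tracking the $(-1)$ that appears because of $\bar{\varphi}_{i,i+1}=-c_i$. Once those conventions are pinned down, everything else is a line-by-line verification, and no cohomological machinery beyond the definition of a defining system is needed.
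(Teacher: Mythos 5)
Your proposal is correct: it is the standard entry-by-entry unpacking of multiplicativity in $\bar{\mathbb{U}}_{n+1}(R)$, with the sign $c_{i,j}=-\bar{\varphi}_{i,j}$ handled consistently, and the observation that the only obstruction to lifting is the $(1,n+1)$-entry equation $-\partial\varphi_{1,n+1}=\sum_{k=2}^{n}c_{1,k}\cup c_{k,n+1}$, which is solvable precisely when $\langle c_1,\dots,c_n\rangle_C$ vanishes in $H^2(G,R)$. The paper itself gives no proof of this statement --- it is quoted from \cite[Theorem 2.4]{Dwyer75} --- and your argument is exactly the one in that reference (adapted to the profinite setting as in \cite{IE}), so there is nothing to compare beyond confirming that your reconstruction is the intended one.
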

\begin{defn}
Following \cite{MT}, for $n \geq 2$ we say that $G$ has the Vanishing $n$-fold Massey product property if every defined
Massey product $\langle c_1, . . . , c_n\rangle$, contains zero.
\end{defn}

\subsection{Triple Massey products in Galois cohomology}
Let $F$ be a field containing a primitive $p$-th root of unity, $\rho$, with  absolute Galois group $G_F$.
We now focus on the case $n=3$ where the DGA used is that of Galois cohomology, namely $C^{\bullet}=C^{\bullet}(G_F,\mathbb{Z}/p\mathbb{Z})$, abbreviated $C^{\bullet}(G_F).$
In this case the above construction can be described as follows: Let $ \chi_a,\chi_b,\chi_c\in H^1(G_F) $ such that
\begin{equation}\label{MC}
  \chi_a\cup\chi_b=\chi_b\cup\chi_c=0 \in H^2(G_F).
\end{equation}

Then there exist $\varphi=\{\varphi_{a,b},\varphi_{b,c}\} \subset C^1(G_F)$ such that \begin{enumerate}
                                                                              \item $\partial(\varphi_{a,b})=\chi_a\cup \chi_b$ in $C^2(G_F).$
                                                                              \item $\partial(\varphi_{b,c})=\chi_b\cup \chi_c$ in $C^2(G_F).$
                                                                            \end{enumerate}
We call such $\varphi$ a defining system.
Then $$\langle \chi_a,\chi_b,\chi_c \rangle_{\varphi}= \chi_a\cup\varphi_{b,c}+ \varphi_{a,b}\cup \chi_c.$$ Note that $$\partial(\langle \chi_a,\chi_b,\chi_c \rangle_{\varphi})=\partial( \chi_a\cup\varphi_{b,c}+ \varphi_{a,b}\cup \chi_c)=
\chi_a\cup \chi_b\cup \chi_c-\chi_a\cup \chi_b\cup \chi_c =0.$$ Hence $\langle \chi_a,\chi_b,\chi_c \rangle_{\varphi}$ represents a class in $H^2(G_F)$ which we also denote by $\langle \chi_a,\chi_b,\chi_c \rangle_{\varphi}$.

Then assuming condition (\ref{MC}) holds, the triple Massey product is:
$$\langle \chi_a,\chi_b,\chi_c \rangle= \{ \langle \chi_a,\chi_b,\chi_c \rangle_{\varphi}| \ \varphi \hbox{ \ is a defining system} \} \subseteq H^2(G_F).$$

\begin{rem}\label{Set}
Notice that once a defining system $\varphi=\{\varphi_{a,b},\varphi_{b,c}\}$ is chosen, $\varphi_{a,b},\varphi_{b,c}$ are unique up to elements of $H^1(G_F)$, thus the set of all possible $\langle \chi_a,\chi_b,\chi_c \rangle_{\varphi}$ is a
coset $$ \langle \chi_a,\chi_b,\chi_c \rangle=\langle \chi_a,\chi_b,\chi_c \rangle_{\varphi}+ \chi_a\cup H^1(G_F) + H^1(G_F)\cup \chi_c.$$
\end{rem}
\begin{proof}
Indeed by definition of cohomology, for $\chi\in H^1(G_F)$ one has $\partial(\chi)=0$, and vice versa.
\end{proof}
This clearly implies,
\begin{prop}\label{p0} Let $\varphi$ be any defining system for $\langle \chi_a,\chi_b,\chi_c \rangle$. Then,
$$0\in \langle \chi_a,\chi_b,\chi_c \rangle\Leftrightarrow \langle \chi_a,\chi_b,\chi_c \rangle_{\varphi}\in \chi_a\cup H^1(G_F) + H^1(G_F)\cup \chi_c.$$
\end{prop}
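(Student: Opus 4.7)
The plan is to read off the statement directly from the coset description given in Remark \ref{Set}. That remark asserts
$$\langle \chi_a,\chi_b,\chi_c \rangle \;=\; \langle \chi_a,\chi_b,\chi_c \rangle_{\varphi} \;+\; \bigl(\chi_a\cup H^1(G_F) + H^1(G_F)\cup \chi_c\bigr),$$
so the entire Massey product is a single coset in $H^2(G_F)$ of the subgroup $S := \chi_a\cup H^1(G_F) + H^1(G_F)\cup \chi_c$. Thus asking whether $0$ lies in the Massey product is the same as asking whether the coset $\langle \chi_a,\chi_b,\chi_c \rangle_\varphi + S$ equals $S$, and the latter holds if and only if $\langle \chi_a,\chi_b,\chi_c \rangle_\varphi \in S$.

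Concretely, I would write the proof as two implications. For $(\Rightarrow)$: assume $0 \in \langle \chi_a,\chi_b,\chi_c \rangle$. By Remark \ref{Set} there exist $\alpha \in \chi_a \cup H^1(G_F)$ and $\beta \in H^1(G_F) \cup \chi_c$ with $0 = \langle \chi_a,\chi_b,\chi_c \rangle_\varphi + \alpha + \beta$, so $\langle \chi_a,\chi_b,\chi_c \rangle_\varphi = -\alpha - \beta \in S$, using that $S$ is closed under negation. For $(\Leftarrow)$: if $\langle \chi_a,\chi_b,\chi_c \rangle_\varphi = \alpha + \beta$ with $\alpha \in \chi_a \cup H^1(G_F)$ and $\beta \in H^1(G_F) \cup \chi_c$, then $0 = \langle \chi_a,\chi_b,\chi_c \rangle_\varphi + (-\alpha) + (-\beta)$ is manifestly in the coset $\langle \chi_a,\chi_b,\chi_c \rangle_\varphi + S$, which by the remark is the Massey product. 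The only thing to note along the way is that $S$ is indeed a subgroup: it is the sum of the images of the two additive homomorphisms $y \mapsto \chi_a \cup y$ and $y \mapsto y \cup \chi_c$ from $H^1(G_F)$ to $H^2(G_F)$. I do not expect any genuine obstacle; the proposition is a formal consequence of the coset structure already recorded in Remark \ref{Set}.
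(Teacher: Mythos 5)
Your proof is correct and is essentially the paper's own argument: the paper derives Proposition \ref{p0} directly from the coset description in Remark \ref{Set} (introducing it with ``This clearly implies''), and you have simply spelled out the two implications that make that deduction explicit.
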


\section{Abelian crossed products}
In this section we give the necessary background on abelian crossed products, define and study a specific abelian crossed product which we will later use.
\subsection{Background}
A crossed product is a central simple algebra with a maximal subfield Galois over the center.
Such algebras, with an abelian Galois group, were studied by Amitsur and Saltman in \cite{AS}.
Our focus is in degree $p^2$, so let us summarize what we need from their work.
\begin{thm}
Let $K/F$ be a Galois extension of fields with Galois group $$\Gal(K/F) = \sg{\s_1,\s_2}\isom \Z/p\Z\times \Z/p\Z,$$ and let $b_1, b_2, u \in \mul{K}$
be elements satisfying the equations
\begin{eqnarray}\label{ACPC}
\s_i(b_i) & = & b_i, \nonumber \\
\s_2(b_1) b_1^{-1} & = & \Norm_{K/K^{\s_1}}(u), \label{Stand}\\
\s_1(b_2)b_2^{-1} & =&  \Norm_{K/K^{\s_2}}(u)^{-1}. \nonumber
\end{eqnarray}
Then the algebra $A = K[z_1,z_2]$, defined by the relations $z_i k z_i^{-1} = \s_i(k)$ for $k\in K$, $z_i^p = b_i$ and $z_2 z_1 = u z_1 z_2$,
is an $F$-central simple algebra containing $K$ as a maximal subfield, and every such algebra has this form.
\end{thm}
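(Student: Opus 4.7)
The plan is to prove the two assertions of the theorem in turn: that data $(b_1, b_2, u)$ satisfying the three listed relations yields a well-defined $F$-central simple algebra of the stated form, and conversely that every $F$-CSA of degree $p^2$ with $K$ as a maximal subfield admits this description. Both directions hinge on the same consistency computations, read in opposite directions.

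For the forward direction, I would treat $A$ as an abstract algebra defined by the listed generators and relations and check that they are mutually compatible precisely under the three hypotheses. Conjugating $z_i^p = b_i$ by $z_i$ forces $\sigma_i(b_i) = b_i$. Iterated conjugation of $z_1$ by $z_2$ telescopes to a product of $\sigma_2$-translates of $u$, and closing up at the $p$-th step against the computation of $b_2 z_1 b_2^{-1}$ yields the third stated relation; the symmetric computation with $z_1^p z_2 z_1^{-p}$ yields the second. Once consistency is established, $A$ is spanned over $K$ by $\{z_1^i z_2^j : 0 \leq i, j < p\}$, and I would obtain $K$-linear independence (and hence $\dim_F A = p^4$) by exhibiting a faithful $p^2$-dimensional $A$-module built from $K$ with the Galois-twisted action. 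The centralizer of $K$ in $A$ coincides with $K$, since conjugation by $z_1^i z_2^j$ acts on $K$ as $\sigma_1^i \sigma_2^j$ and is trivial only for $(i,j) = (0,0)$; the dimension count then makes $K$ maximal, and $Z(A) \subseteq \Ce{K} = K$ intersected with the $\Gal(K/F)$-fixed subalgebra is exactly $F$. Simplicity follows from the standard crossed-product argument: a nonzero two-sided ideal, analyzed through its $K$-support on the basis $z_1^i z_2^j$, can be reduced by the left/right actions of the $z_i$'s and by $K$-multiplication to contain a nonzero scalar.

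For the converse, given any $F$-CSA $A$ of degree $p^2$ with $K$ a maximal subfield, Skolem--Noether produces $z_1, z_2 \in A^\times$ inducing $\sigma_1, \sigma_2$ by conjugation. I would then set $b_i := z_i^p$ and $u := z_2 z_1 z_2^{-1} z_1^{-1}$. The element $b_i$ centralizes $K$ because $\sigma_i^p = 1$, hence lies in $\Ce{K} = K$ and is manifestly $\sigma_i$-fixed; commutativity of $\Gal(K/F)$ forces $u$ to centralize $K$, so $u \in K^\times$. The same telescoping identities from the first direction then force the three norm relations on $(b_1, b_2, u)$.

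The main obstacle is the simplicity of $A$ in the forward direction: linear independence of the $z_1^i z_2^j$ and centrality reduce to clean dimension/fixed-point arguments, but excluding nontrivial two-sided ideals requires the careful $K$-support reduction. A more uniform alternative would be to reinterpret $(b_1, b_2, u)$ as a $2$-cocycle in $Z^2(\Gal(K/F), K^\times)$ and invoke the general theory of crossed-product algebras; verifying the cocycle identity is then equivalent to the three norm relations.
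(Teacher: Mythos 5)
This theorem is not proved in the paper at all: it is quoted as background from Amitsur--Saltman \cite{AS}, so there is no in-paper argument to compare against. Your proposal is, in substance, a correct reconstruction of the standard Amitsur--Saltman argument. The consistency computations are right: conjugating $z_1^p=b_1$ by $z_2$ gives $\sigma_2(b_1)=\big(\prod_{i=0}^{p-1}\sigma_1^i(u)\big)\,b_1$, and conjugating $z_2^p=b_2$ by $z_1$ gives $\sigma_1(b_2)=\big(\prod_{i=0}^{p-1}\sigma_2^i(u)\big)^{-1}b_2$, which are exactly relations two and three; the converse via Skolem--Noether, $b_i:=z_i^p\in C_A(K)=K$ and $u:=z_2z_1z_2^{-1}z_1^{-1}\in K^\times$, is likewise correct and is how one sees that every degree-$p^2$ algebra with maximal subfield $K$ arises this way.

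One soft spot to tighten in the forward direction. For an algebra given by generators and relations, ``checking that the relations are mutually compatible'' does not by itself rule out hidden collapses, so the whole weight of the existence statement rests on producing a model in which the monomials $z_1^iz_2^j$ are genuinely $K$-independent. Your faithful module must be $p^2$-dimensional over $K$ (hence $p^4$ over $F$), not over $F$ --- a faithful $p^2$-dimensional $F$-module would force $A\cong M_{p^2}(F)$, which is false in general; and a single twisted copy of $K$ is not a module, since $z_i^p$ must act as multiplication by $b_i$, not as the identity. The clean fix is exactly your closing alternative: take the free $K$-module $\bigoplus_{g\in G}Kx_g$ with $x_gk=g(k)x_g$ and $x_gx_h=c(g,h)x_{gh}$, where the $2$-cocycle $c$ is determined by $(b_1,b_2,u)$ via $x_{\sigma_1^i\sigma_2^j}=z_1^iz_2^j$; associativity of this multiplication is equivalent to the cocycle identity, which in turn is equivalent to your three relations, and then $K$-freeness, centrality, simplicity, and maximality of $K$ all follow from the general crossed-product theory. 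I would promote that alternative from an afterthought to the main line of the proof; it is also precisely the route taken in \cite{AS}.
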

The crossed product defined above is denoted $(K/F,\set{\s_1,\s_2},\set{b_1,b_2,u})$.

\subsection{Construction of an abelian crossed product $A_{v_1,v_2}$}  \ \\
Let $a_1,a_2\in \mul{F}$ and let $F_i=F[x_i|x_i^p=a_i], i=1,2$, be the corresponding Kummer extensions of $F$, with Galois groups,
$G_i=\G(F_i/F)=\langle\s_i\rangle \cong \mathbb{Z}/p\mathbb{Z}.$
Let $K=F_1F_2$. Then $\G(K/F)=\langle\s_1,\s_2\rangle$. Note that $K^{\s_1}=F_2, \  K^{\s_2}=F_1$.
Assume that there exist elements $v_i\in F_i$, $i=1,2$, such that
\begin{equation}\label{eq1}
  \N_{F_{a_1}/F}(v_1)=\N_{F_{a_2}/F}(v_2).
\end{equation}
Define \begin{equation}\label{eq2}
 \nonumber u=\frac{v_2}{v_1}.
\end{equation}
Notice that $\N_{K/F}(u)=1.$
\begin{prop}\label{prop3}
Let  \begin{equation}\label{eq3}
\nonumber w_1=\prod_{i=0}^{p-1}\s_2^i(v_2)^i\in F_2 \hbox{, \ } w_2=\prod_{i=0}^{p-1}\s_1^i(v_1)^i\in F_1.
\end{equation}
Then we have: \begin{enumerate}
    \item $\N_{K/F_2}(u)=\frac{\s_2(w_1)}{w_1}.$
    \item $\N_{K/F_1}(u)^{-1}=\frac{\s_1(w_2)}{w_2}.$

\end{enumerate}

\end{prop}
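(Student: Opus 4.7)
The plan is to verify both identities by direct computation: unfold each norm and each product over the cyclic Galois group in question, and then match the two resulting expressions using hypothesis \eq{eq1}.

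First, for (1), I would observe that $\Gal(K/F_2) = \langle \s_1 \rangle$, so
$\N_{K/F_2}(u) = \prod_{i=0}^{p-1} \s_1^i(v_2/v_1)$. Since $v_2 \in F_2$ is fixed by $\s_1$ and $\prod_{i=0}^{p-1} \s_1^i(v_1) = \N_{F_1/F}(v_1)$, this collapses to $\N_{K/F_2}(u) = v_2^p/\N_{F_1/F}(v_1)$. Independently, I would compute $\s_2(w_1)/w_1$ by a telescoping re-indexing: writing $\s_2(w_1) = \prod_{i=0}^{p-1} \s_2^{i+1}(v_2)^i$ and substituting $j = i+1$, then isolating the term $j = p$ (which contributes $v_2^{p-1}$ since $\s_2^p(v_2) = v_2$), the ratio simplifies to $v_2^{p-1}/\prod_{j=1}^{p-1} \s_2^j(v_2)$, and after multiplying top and bottom by $v_2$ this becomes $v_2^p/\N_{F_2/F}(v_2)$. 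The two expressions then coincide by hypothesis \eq{eq1}, establishing (1).

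Part (2) then follows by the same computation with the roles of $1$ and $2$ swapped. One finds $\N_{K/F_1}(u) = \N_{F_2/F}(v_2)/v_1^p$, so $\N_{K/F_1}(u)^{-1} = v_1^p/\N_{F_2/F}(v_2)$; and the parallel telescoping computation on $w_2$ gives $\s_1(w_2)/w_2 = v_1^p/\N_{F_1/F}(v_1)$. These match once more via \eq{eq1}.

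The only step requiring any care is the telescoping manipulation and tracking of exponents $j-1$ versus $j$, so I do not anticipate a substantive obstacle; the proposition is really a bookkeeping identity which witnesses explicitly that the condition $\N_{K/F}(u) = 1$ (already noted in the construction) admits the coboundary solutions $w_1, w_2$ needed to realize the two standard relations \eq{Stand} for the abelian crossed product $A_{v_1,v_2}$.
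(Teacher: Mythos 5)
Your proof is correct and follows essentially the same direct computation as the paper: both unfold the norm over $\langle\s_1\rangle$ (resp.\ $\langle\s_2\rangle$) to get $v_2^p/\N_{F_1/F}(v_1)$ (resp.\ $v_1^p/\N_{F_2/F}(v_2)$), reduce $\s_2(w_1)/w_1$ (resp.\ $\s_1(w_2)/w_2$) to $v_2^p/\N_{F_2/F}(v_2)$ (resp.\ $v_1^p/\N_{F_1/F}(v_1)$), and then invoke \eq{eq1}. The only cosmetic difference is that you telescope the ratio directly, while the paper multiplies and divides $\s_2(w_1)$ by the full norm $\N_{F_2/F}(v_2)$ to reach the same identity.
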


\begin{proof}
    (1) \quad We compute
    \begin{eqnarray}
    \nonumber  \s_2(w_1) &=& \s_2(\prod_{i=0}^{p-1}\s_2^i(v_2)^i) \\
     \nonumber &=& \s_2^2(v_2)\s_2^3(v_2)^2 \cdot \cdot \cdot \s_2^{p-1}(v_2) ^{p-2} v_2 ^{p-1} \\
     \nonumber &=& \frac{\prod _{i=0}^{p-1} \s_2^i(v_2)}{\prod _{i=0}^{p-1} \s_2^i(v_2)}\s_2^2(v_2)\s_2^3(v_2)^2
     \cdot \cdot \cdot \s_2^{p-1}(v_2) ^{p-2} v_2 ^{p-1} \\
     \nonumber &=& \frac{v_2^p\prod _{i=0}^{p-1} \s_2^i(v_2)^i}{\N_{F_2/F}(v_2)}=\frac{v_2^p}{\N_{F_1/F}(v_1)}w_1=\N_{K/F_2}(\frac{v_2}{v_1})w_1=\N_{K/F_2}(u)w_1.
    \end{eqnarray}
  Thus, $\N_{K/F_2}(u)=\frac{\s_2(w_1)}{w_1}.$

(2) \quad We compute
    \begin{eqnarray}
    \nonumber  \s_1(w_2) &=& \s_1(\prod_{i=0}^{p-1}\s_1^i(v_1)^i)  \\
     \nonumber &=& \s_1^2(v_1)\s_1^3(v_1)^2 \cdot \cdot \cdot \s_1^{p-1}(v_1) ^{p-2} v_1 ^{p-1} \\
     \nonumber &=& \frac{\prod _{i=0}^{p-1} \s_1^i(v_1)}{\prod _{i=0}^{p-1} \s_1^i(v_1)}\s_1^2(v_1)\s_1^3(v_1)^2
     \cdot \cdot \cdot \s_1^{p-1}(v_1) ^{p-2} v_1 ^{p-1} \\
     \nonumber &=& \frac{v_1^p\prod _{i=0}^{p-1} \s_1^i(v_1)^i}{\N_{F_1/F}(v_1)}=\frac{v_1^p}{\N_{F_2/F}(v_2)}w_2=\N_{K/F_1}(\frac{v_1}{v_2})w_2= \N_{K/F_1}(u)^{-1}w_2.
    \end{eqnarray}
  Thus, $\N_{K/F_1}(u)^{-1}=\frac{\s_1(w_2)}{w_2}.$
  \end{proof}

We have proved:

\begin{prop}\label{example1}
Let $v_1,v_2,w_1,w_2\in K^{\times}$ be as in (\ref{eq1}), (\ref{eq3}) respectively. Consider the following data:
$$u=\frac{v_2}{v_1}, \ b_1=w_2, \ b_2=w_1.$$
Then all the conditions of (\ref{ACPC}) are satisfied and there exist an abelian crossed product, $$ A_{v_1,v_2}=(K/F,\set{\s_1,\s_2},\set{w_2,w_1,u}).$$
\end{prop}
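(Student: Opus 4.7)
The plan is to verify each of the three defining relations of (\ref{ACPC}) for the data $(b_1,b_2,u)$ specified in the proposition, by combining what was already established in \Pref{prop3} with the hypothesis (\ref{eq1}). The work is essentially bookkeeping; the substantive computation has already been done in \Pref{prop3}.

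First I would check the fixed-point conditions $\s_i(b_i)=b_i$. By construction $w_1=\prod_{i=0}^{p-1}\s_2^i(v_2)^i$ is a product of elements of $F_2$, hence lies in $F_2=K^{\s_1}$; symmetrically $w_2$ lies in $F_1=K^{\s_2}$. Once each $b_i$ is matched with the member of $\{w_1,w_2\}$ that sits in $K^{\s_i}$, the fixed-point conditions are immediate.

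Next I would verify the two norm conditions $\s_2(b_1)b_1^{-1}=\N_{K/K^{\s_1}}(u)$ and $\s_1(b_2)b_2^{-1}=\N_{K/K^{\s_2}}(u)^{-1}$. Using $K^{\s_1}=F_2$ and $K^{\s_2}=F_1$, these reduce precisely to
\[\N_{K/F_2}(u)=\frac{\s_2(w_1)}{w_1},\qquad \N_{K/F_1}(u)^{-1}=\frac{\s_1(w_2)}{w_2},\]
which are parts (1) and (2) of \Pref{prop3}. So no further calculation is required.

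The only substantive input is (\ref{eq1}), which was used inside the proof of \Pref{prop3}: after the telescoping of $\s_j(w_i)$ against $w_i$, a leftover factor $\N_{F_j/F}(v_j)$ in the denominator has to be traded for $\N_{F_i/F}(v_i)$ in order to recognize the result as $\N_{K/F_j}(u^{\pm 1})$. I do not anticipate any further obstacle: once the three relations of (\ref{ACPC}) hold, the Amitsur--Saltman theorem recalled at the start of the section guarantees that $A_{v_1,v_2}$ is a well-defined $F$-central simple algebra of degree $p^2$ containing $K$ as a maximal subfield.
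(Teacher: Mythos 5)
Your proof is correct and is essentially the paper's own argument: the paper offers no separate proof of this proposition, but simply records it as already established by Proposition~\ref{prop3}, exactly as you observe, with the fixed-point conditions coming for free from $w_1\in F_2$ and $w_2\in F_1$. Your remark that each $b_i$ must be matched with the element of $\{w_1,w_2\}$ lying in $K^{\s_i}$ --- that is, $b_1=w_1\in F_2=K^{\s_1}$ and $b_2=w_2\in F_1=K^{\s_2}$ --- is the right reading: this is the assignment forced by (\ref{ACPC}) together with Proposition~\ref{prop3}, and the transposed labels $b_1=w_2$, $b_2=w_1$ displayed in the statement (and in the relations $z_1^p=w_2$, $z_2^p=w_1$ that follow it) are an index slip which the paper itself silently reverses when the construction is reused in Section~6.
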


We recall that this means that $A_{v_1,v_2}=K[z_1,z_2]$ such that the following relations hold,
\begin{align}\label{t}
  \nonumber & z_ikz_i^{-1} = \s_i(k) \hbox{ for } i=1,2 \hbox{ and all } k\in K, \\
  \nonumber & z_1^p =w_2, \ z_2^p=w_1,\\
  \nonumber & z_2z_1= uz_1z_2.
\end{align}

For the rest of this subsection we will show that $$A_{v_1,v_2}\cong (a,\alpha)_p\otimes (c,\beta)_p$$ for some $\alpha,\beta\in F^{\times}.$
 \begin{lem}\label{relations}
    \begin{enumerate}
      \item Let $g=\sigma_1 \sigma_2$, then $\N_{K/K^{\langle g\rangle}}(u)=1.$
      \item There exist $t\in K$ such that $u=\frac{g(t)}{t}.$
    \end{enumerate}
\end{lem}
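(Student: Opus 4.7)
The plan is to read the lemma as a two-step application of Hilbert 90 to the cyclic subextension $K/K^{\langle g\rangle}$, with the work concentrated in part (1).

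For part (1), I will first note that $g = \sigma_1\sigma_2$ has order $p$, so $\langle g\rangle$ is one of the three order-$p$ subgroups of $\Gal(K/F)$ and $K^{\langle g\rangle}/F$ is a cyclic degree-$p$ subextension. Then $N_{K/K^{\langle g\rangle}}(u) = \prod_{i=0}^{p-1} g^i(u)$. The key observation is that $v_1 \in F_1 = K^{\sigma_2}$ is fixed by $\sigma_2$, so $g(v_1) = \sigma_1\sigma_2(v_1) = \sigma_1(v_1)$ and inductively $g^i(v_1) = \sigma_1^i(v_1)$; symmetrically, since $v_2 \in F_2 = K^{\sigma_1}$, we get $g^i(v_2) = \sigma_2^i(v_2)$. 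Substituting $u = v_2/v_1$ then makes the product telescope into
\[
N_{K/K^{\langle g\rangle}}(u) \;=\; \frac{\prod_{i=0}^{p-1}\sigma_2^i(v_2)}{\prod_{i=0}^{p-1}\sigma_1^i(v_1)} \;=\; \frac{\N_{F_2/F}(v_2)}{\N_{F_1/F}(v_1)},
\]
which equals $1$ by hypothesis \eq{eq1}.

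For part (2), since $K/K^{\langle g\rangle}$ is cyclic of degree $p$ with Galois group generated by $g$, and since by part (1) the element $u$ has norm $1$ from $K$ down to $K^{\langle g\rangle}$, Hilbert's Theorem 90 applied to this cyclic extension produces an element $t \in K^{\times}$ with $u = g(t)/t$.

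There is no real obstacle here: once the identification $g^i(v_j) = \sigma_j^i(v_j)$ is in place (which uses only that $v_j$ lies in the fixed field of the ``other'' generator), part (1) is a one-line telescoping argument and part (2) is an immediate invocation of Hilbert 90. The role of this lemma in the paper is evidently to prepare the element $t$ that will later be used to manipulate the abelian crossed product $A_{v_1,v_2}$ constructed in \Pref{example1}, so I would keep the proof short and emphasize the two inputs (the normalization \eq{eq1} and Hilbert 90) rather than any calculation.
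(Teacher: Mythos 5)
Your proof is correct and follows essentially the same route as the paper: reduce $g^i(v_j)$ to $\sigma_j^i(v_j)$ using that each $v_j$ lies in the fixed field of the other generator, identify the resulting quotient of products with $\N_{F_2/F}(v_2)/\N_{F_1/F}(v_1)=1$ via the normalization hypothesis, and then invoke Hilbert 90 for part (2). (Only a cosmetic quibble: the product does not really ``telescope''; it simply factors.)
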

\begin{proof}  \ \\
    (1) \quad We compute:
      \begin{eqnarray}
          \nonumber    \N_{K/k^{\langle g\rangle}}(u)&=&\frac{\N_{K/K^{\langle g\rangle}}(v_2)}{\N_{K/K^{\langle g\rangle}}(v_1)}=
          \frac{\prod_{i=0}^{p-1} g^i(v_2)}{\prod_{i=0}^{p-1} g^i(v_1)}\\
          \nonumber    &=&\frac{\prod_{i=0}^{p-1} (\sigma_1 \sigma_2)^i(v_2)}{\prod_{i=0}^{p-1} (\sigma_1 \sigma_2)^i(v_1)}=
          \frac{\prod_{i=0}^{p-1} (\sigma_2)^i(v_2)}{\prod_{i=0}^{p-1} (\sigma_1)^i(v_1)}\\
          \nonumber    &=&\frac{\N_{K/F_1}(v_2)}{\N_{K/F_2}(v_1)}=1.
      \end{eqnarray}
    (2) \quad This follows from (1) and Hilbert $90$.
\end{proof}

We also need some computational results which we now prove.
 \begin{prop}\label{Prop}
 Define the following elements of $A_{v_1,v_2}$\rm: $$Z=z_1z_2, W=tz_2, X=x_1^{-1}x_2, Y=x_1.$$ Then we have:
\begin{enumerate}
  \item $X^p=\frac{a_2}{a_1}.$
  \item  $W^p \in F.$
  \item $W X=\rho XW.$
  \item $Y^p=a_1.$
  \item $Z^p \in F.$
  \item $Z Y=\rho YZ.$
  \item $Z X=XZ.$
  \item $Z W= WZ.$
  \item $YX=XY.$
  \item $YW=WY.$

\end{enumerate}

 \end{prop}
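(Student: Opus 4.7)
The plan is to verify the ten identities one at a time, grouped by the depth of computation required. Claims (1), (4), and (9) are immediate since $X, Y, x_1, x_2$ all lie in the commutative field $K$: $Y^p = a_1$, $X^p = x_1^{-p}x_2^p = a_2/a_1$, and $YX = XY$. Claims (3), (6), (7), and (10) are short non-commutative computations based on $z_i k z_i^{-1} = \s_i(k)$ together with $\s_i(x_j) = \rho^{\delta_{ij}} x_j$. For instance, $ZY = z_1 z_2 x_1 = z_1 x_1 z_2 = \rho x_1 z_1 z_2 = \rho YZ$, and $ZX = XZ$ because $g(X) = g(x_1^{-1}x_2) = (\rho x_1)^{-1}(\rho x_2) = X$; claim (10) is trivial because $\s_2(x_1)=x_1$.

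The substantive statements are (8), (2), and (5); all three rely on the defining identity $u = g(t)/t$ from \Lref{relations}. For (8), one expands
\[
ZW = z_1 z_2 \cdot tz_2 = g(t)\, z_1 z_2^2, \qquad WZ = tz_2 \cdot z_1 z_2 = tu\, z_1 z_2^2,
\]
and concludes via $g(t)=ut$.

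For (5) my approach is to show that $Z^p$ is central in the $F$-central simple algebra $A$, so that $Z^p \in F$. Conjugation by $Z$ implements $g=\s_1\s_2$ on $K$, which has order $p$, hence $Z^p$ centralizes $K$ and therefore lies in the maximal subfield $K$. It remains to check that $Z^p$ commutes with $z_1$ and $z_2$. A direct calculation using $z_2 z_1 = uz_1z_2$ gives
\[
Z z_1 Z^{-1} = \s_1(u)\, z_1, \qquad Z z_2 Z^{-1} = u^{-1}\, z_2,
\]
so that iterating $p$ times produces conjugation factors $\N_{K/K^{\langle g\rangle}}(\s_1(u))$ and $\N_{K/K^{\langle g\rangle}}(u)^{-1}$, both equal to $1$ by \Lref{relations}(1) (using that $\s_1$ and $g$ commute). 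Claim (2) follows the same template: $W=tz_2$ acts on $K$ as $\s_2$, so $W^p$ centralizes $K$ and lies in $K$; computing $W z_i W^{-1}$ and using $u=g(t)/t$ reduces the conjugation factors to explicit expressions in $\s_2(t)/t$, whose $p$-fold iterates are $\N_{K/F_1}(\s_2(t)/t)$, equal to $1$ because $\s_2$ permutes the factors of $\N_{K/F_1}(t)$ cyclically.

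The main obstacle is the bookkeeping in the centrality arguments for (2) and (5): one must track carefully which cocycle in $u$ (or $t$) appears in each single conjugation $Z z_i Z^{-1}$ or $W z_i W^{-1}$, and then recognise the $p$-fold product as a norm that vanishes either by \Lref{relations}(1) or by the cyclic symmetry of $\N_{K/F_1}$. Once this is set up, both computations rest on the two identities $u=g(t)/t$ and $\N_{K/K^{\langle g\rangle}}(u)=1$.
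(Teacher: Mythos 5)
Your proof is correct, and for the two substantive items (2) and (5) it takes a genuinely different route from the paper. Both arguments begin the same way: $W$ (resp.\ $Z$) conjugates $K$ as $\s_2$ (resp.\ $g=\s_1\s_2$), so $W^p$ and $Z^p$ centralize $K$ and hence lie in $K$. From there the paper finishes by Galois descent: it first proves the commutation relations (7)--(10), observes that $\Gal(K/F)=\langle g,\s_2\rangle$ is implemented by conjugation by $Z$ and $W$, and then reads off $g(W^p)=ZW^pZ^{-1}=W^p$ and $\s_2(W^p)=WW^pW^{-1}=W^p$ directly from item (8), so $W^p\in K^{\Gal(K/F)}=F$ with essentially no further computation. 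You instead prove full centrality in $A_{v_1,v_2}$ by conjugating the generators $z_1,z_2$ and identifying the resulting $p$-fold cocycle products as norms: $\N_{K/K^{\langle g\rangle}}(u)=1$ from \Lref{relations}(1) handles $Z^p$, and the cyclic invariance of $\N_{K/F_1}$ together with $u=g(t)/t$ handles $W^p$ (your formulas $Zz_1Z^{-1}=\s_1(u)z_1$, $Zz_2Z^{-1}=u^{-1}z_2$, and $Wz_1W^{-1}=\s_1(\s_2(t)/t)\,z_1$, $Wz_2W^{-1}=(t/\s_2(t))\,z_2$ all check out, and the commutation of $\s_1$ with $g$ and with $\s_2$ lets you pull $\s_1$ outside the norms). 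The paper's version is shorter because it recycles (7)--(10); yours is more self-contained at the cost of the norm bookkeeping you flag, and it makes explicit that $Z^p$ and $W^p$ are actually central in the algebra rather than merely Galois-invariant elements of $K$ (the two conditions are of course equivalent here). The remaining items match the paper's direct computations, with (8) in both cases reducing to the single identity $g(t)=ut$.
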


\begin{proof} \ \\
                  (1), (4) \quad Recall that $K=F[x_1,x_2]$ and $x_1^p=a_1$, $x_2^p=a_2$. Now we compute:
                            $X^p=\Big(\frac{x_2}{x_1}\Big)^p=\frac{a_2}{a_1}.$
                            Statement (4) follows by a similar computation.\\
                  (7), (8), (9), (10) \quad  We prove statement (8) and note that statements (7), (9) and (10) follow by similar computations.
                  We compute:
                  \begin{eqnarray}
                  \nonumber ZWZ^{-1}W^{-1}&=& z_1z_2tz_2(z_1z_2)^{-1}(tz_2)^{-1}\\
                  \nonumber &=& z_1z_2tz_2z_2^{-1}z_1^{-1}z_2^{-1}t^{-1}\\
                  \nonumber &=& z_1z_2tz_1^{-1}z_2^{-1}t^{-1}\\
                  \nonumber &=& z_1z_2z_1^{-1}z_2^{-1}\sigma_2 \sigma_1(t)t^{-1}=u^{-1}u=1.
                  \end{eqnarray}
                  (2), (5) \quad First notice that $W$ acts by conjugation on $K$ as $\sigma_2$, indeed for $k\in K$ we have
                            $$ WkW^{-1}=tz_2k(tz_2)^{-1}=tz_2kz_2^{-1}t^{-1}=\s_2(k).$$
                            As $\s_2$ is of order $p$, we see that $W^p$ acts by conjugation on $K$ as the identity, so $W^p$ commutes with $K$.
                            But $K$ is a maximal subfield, thus $\C_{A_{v_1,v_2}}(K)=K$, implying that $W^p\in K$.
                            Now $\Gal(K/F)=\langle g,\sigma_2\rangle =\langle \sigma_1 \sigma_2,\sigma_2\rangle$, and it is enough to show $W^p$ is invariant
                            under its action. Notice that $Z$ acts on $K$ by conjugation as $g=\s_1\s_2$. Hence,
                            $$g(W^p)=ZW^pZ^{-1}\stackrel{(8)}{=}W^p$$ and $$\sigma_2(W^p)=WW^pW^{-1}=W^p$$ and statement (2) follows. \\
                            Statement (5) follows by a similar computation.\\
                  (3), (6) \quad We compute:
                  \begin{eqnarray}
                  \nonumber WX&=& tz_2x_1^{-1}x_2=x_1^{-1}tz_2x_2=x_1^{-1}t\rho x_2z_2\\
                  \nonumber &=& \rho x_1^{-1}x_2tz_2=\rho XW.
                  \end{eqnarray} Statement (6) follows by a similar computation.
                  \end{proof}
 \ \\
Finally we have:
\begin{thm}\label{MTB}
$$A_{v_1,v_2}\cong (a_1,W^{-p})_p\otimes (a_2,W^pZ^p)_p.$$
\end{thm}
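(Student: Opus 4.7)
My plan is to use the ten relations of \Pref{Prop} to decompose $A_{v_1,v_2}$ as a tensor product of two commuting symbol subalgebras, and then to convert this decomposition into the claimed form via a Brauer-group manipulation.

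From \Pref{Prop}, relations (4), (5), (6) show that $F\freegen{Y,Z}\isom (a_1,Z^p)_p$, while (1), (2), (3) show $F\freegen{X,W}\isom (a_2/a_1,W^p)_p$. Relations (7)--(10) assert that these two subalgebras commute element-wise, and since their dimensions multiply to $p^4=\dim_F A_{v_1,v_2}$, one obtains
\[
A_{v_1,v_2}\isom (a_1,Z^p)_p\otimes_F(a_2/a_1,W^p)_p.
\]
Both this algebra and the right-hand side of the theorem are central simple $F$-algebras of dimension $p^4$, so it suffices to check Brauer equivalence; bilinearity of cyclic symbols in $\Br_p(F)$ reduces the desired identity to the single condition $(a_1/a_2,Z^p)_p=0$.

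The heart of the proof is therefore the analysis of $Z^p$. By induction on $n$ one verifies $Z^n=\alpha_n z_1^n z_2^n$, with $\alpha_1=1$ and $\alpha_{n+1}=g(\alpha_n)\prod_{j=1}^n \s_1^j(u)$ (where $g=\s_1\s_2$); unrolling the recursion and using $u=v_2/v_1$ with $v_i\in F_i$ produces the closed form
\[
\alpha_n=\frac{\prod_{i=0}^{n-1}\s_2^i(v_2)^{n-1-i}}{\prod_{j=1}^{n-1}\s_1^j(v_1)^j}.
\]
Multiplying $\alpha_p$ by $w_1w_2$ from \Pref{prop3} and invoking the hypothesis $\N_{F_1/F}(v_1)=\N_{F_2/F}(v_2)=:\nu$ from \eq{eq1}, the expression collapses to $Z^p=\nu^{p-1}$.

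Since $\nu$ is by construction a norm from both $F_1=F[\sqrt[p]{a_1}]$ and $F_2=F[\sqrt[p]{a_2}]$, the symbols $(a_1,\nu)_p$ and $(a_2,\nu)_p$ vanish in $\Br_p(F)$; hence $(a_1/a_2,Z^p)_p=(p-1)\bigl[(a_1,\nu)_p-(a_2,\nu)_p\bigr]=0$, establishing the Brauer equivalence and thus the theorem. The main obstacle I expect is the explicit evaluation of $Z^p$: the recursion for $\alpha_n$ together with the formulas for $w_1,w_2$ looks intricate, and one must track the cancellations carefully to recognize the emergence of $\nu^{p-1}$.
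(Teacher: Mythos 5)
Your decomposition of $A_{v_1,v_2}$ into the commuting subalgebras $F[X,W]\cong(a_2/a_1,W^p)_p$ and $F[Y,Z]\cong(a_1,Z^p)_p$ is exactly the paper's argument (element-wise commutation from \Pref{Prop}(7)--(10), then the Double Centralizer Theorem and a dimension count). Where you diverge is in the final step, and your version is the more careful one. The paper's proof writes $F[Y,Z]=(a_2,Z^p)_p$ and declares that the theorem ``follows''; but \Pref{Prop}(4) gives $Y^p=a_1$ (and $Y$ must indeed be $x_1$ rather than $x_2$ for relation (10) to hold), so bilinearity alone only yields $A_{v_1,v_2}\cong(a_1,W^{-p}Z^p)_p\otimes(a_2,W^p)_p$, which differs from the stated form by precisely the symbol $(a_1/a_2,Z^p)_p$ that you isolate. (That weaker decomposition already suffices for \Tref{Main}, which only needs symbols with slots $a_1$ and $a_2$, but it is not literally the statement of \Tref{MTB}.) Your evaluation of $Z^p$ is the genuinely new ingredient and it closes this gap: I checked the recursion $\alpha_{n+1}=g(\alpha_n)\prod_{j=1}^{n}\sigma_1^j(u)$, the closed form for $\alpha_n$, and the telescoping against $w_1w_2$, and they all hold, giving $Z^p=\nu^{p-1}$ with $\nu=\N_{F_1/F}(v_1)=\N_{F_2/F}(v_2)$. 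Since $\nu$ is a norm from both $F_1$ and $F_2$, both $(a_1,\nu)_p$ and $(a_2,\nu)_p$ vanish and the residual symbol is trivial. So your proof is correct, proves the theorem exactly as stated, and in fact repairs an inaccuracy in the paper's own argument.
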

\begin{proof}
Define two subalgebras of $A_{v_1,v_2}$: $$A_1=F[X,W] \ \ \ , \ \ \ A_2=F[Y,Z].$$
First note that by $(1)-(3)$ of Proposition \ref{Prop} we see that $A_1=(\frac{a_2}{a_1},W^p)_p$ and
by $(4)-(6)$ of the same Proposition we see that $A_2=(a_2,Z^p)_p.$
Now from $(7)-(10)$ of Proposition \ref{Prop} we get that $A_1,A_2$ commute element-wise.
Thus, by dimension count and the Double Centralizer Theorem (see \cite[Theorem 24.32]{Rowen}) we get that
$$A_{v_1,v_2}\cong A_1\otimes A_2=(a_1^{-1}a_2,W^p)_p\otimes(a_2,Z^p)_p$$ and
the Theorem follows.
\end{proof}

\section{Triple Massey products and abelian crossed products}
In this section we connect triple Massey products and abelian crossed products.\\
Recall from subsection 2.3 that given $\chi_a,\chi_b,\chi_c\in H^1(G_F)$ such that $$\chi_a\cup \chi_b=\chi_b\cup \chi_c=0,$$ a defining system is a subset
$\varphi=\{ \varphi_{a,b},\varphi_{a,b}\}\subset C^1(G_F)$ such that $$\partial(\varphi_{a,b})=\chi_a\cup \chi_b \hbox{ and } \partial(\varphi_{b,c})=\chi_b\cup \chi_c$$ in $C^2(G_F).$
For every defining system one constructs $$\langle \chi_a,\chi_b,\chi_c \rangle_{\varphi}=\chi_a\cup \varphi_{b,c}+\varphi_{a,b}\cup \chi_c$$ and the triple Massey product of $\chi_a,\chi_b,\chi_c$ is
$$\langle \chi_a,\chi_b,\chi_c \rangle=\{ \langle \chi_a,\chi_b,\chi_c \rangle_{\varphi}| \ \varphi \hbox{ \ is a defining system} \} \subseteq H^2(G_F).$$
\begin{prop}\label{split} For a defining system $\varphi$ as above we have:
\begin{enumerate}

\item $\res_{\K(\chi_a)}(\langle \chi_a,\chi_b,\chi_c \rangle_{\varphi})=\varphi_{a,b}\cup \chi_c\in H^2(\K(\chi_a)).$
\item $\res_{\K(\chi_c)}(\langle \chi_a,\chi_b,\chi_c \rangle_{\varphi})=\chi_a\cup \varphi_{b,c}\in H^2(\K(\chi_c)).$
\item $\res_{\K(\chi_a)\cap \K(\chi_c)}(\langle \chi_a,\chi_b,\chi_c \rangle_{\varphi})=0.$

\end{enumerate}
\end{prop}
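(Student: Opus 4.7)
The plan is to exploit two facts: restriction of cochains from $G_F$ to a closed subgroup $H\leq G_F$ commutes with both the cup product and the differential $\partial$, and the subgroup $\K(\chi_a)\leq G_F$ is, by definition, the set of elements on which the $1$-cocycle $\chi_a$ vanishes. Everything else is bookkeeping.

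For (1), I would restrict the cocycle $\chi_a\cup\varphi_{b,c}+\varphi_{a,b}\cup\chi_c$ termwise to $H=\K(\chi_a)$. Since $\chi_a|_{H}=0$ by definition of the kernel, the first summand dies in $C^2(H)$. For the second summand, the identity $\partial(\varphi_{a,b})=\chi_a\cup\chi_b$ restricts to $\partial(\varphi_{a,b}|_{H})=0$, so $\varphi_{a,b}|_{H}$ is an honest $1$-cocycle on $H$ and $\varphi_{a,b}|_{H}\cup\chi_c|_{H}$ represents a well-defined class in $H^2(\K(\chi_a))$, which is exactly $\varphi_{a,b}\cup\chi_c$ as claimed. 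Statement (2) is the symmetric argument with $\chi_c$ and $\K(\chi_c)$ in place of $\chi_a$ and $\K(\chi_a)$. For (3), restricting one step further to $H=\K(\chi_a)\cap\K(\chi_c)$ kills both $\chi_a$ and $\chi_c$ simultaneously, so both summands of $\langle\chi_a,\chi_b,\chi_c\rangle_{\varphi}$ vanish termwise as cochains on $H$, and the class is zero in $H^2(H)$.

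The only point that needs care is conceptual rather than computational: on $G_F$ itself neither $\chi_a\cup\varphi_{b,c}$ nor $\varphi_{a,b}\cup\chi_c$ is a cocycle; each has coboundary $\pm\chi_a\cup\chi_b\cup\chi_c$, and these cancel only in the sum. What makes the proof go through is precisely that after restriction to $\K(\chi_a)$ (resp.\ $\K(\chi_c)$) the obstructing term $\chi_a\cup\chi_b$ (resp.\ $\chi_b\cup\chi_c$) becomes trivial, so the individual piece acquires an independent meaning as a genuine cohomology class. I do not expect any substantive obstacle beyond making this distinction explicit.
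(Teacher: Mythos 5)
Your argument is correct and is essentially the same as the paper's: restrict the cochain $\chi_a\cup\varphi_{b,c}+\varphi_{a,b}\cup\chi_c$ termwise and use that $\chi_a$ vanishes on $\K(\chi_a)$ (resp.\ $\chi_c$ on $\K(\chi_c)$) to kill the appropriate summand. Your additional observation that $\varphi_{a,b}|_{\K(\chi_a)}$ becomes an honest cocycle because $\partial(\varphi_{a,b})=\chi_a\cup\chi_b$ restricts to zero is a point the paper leaves implicit, and it is worth making explicit.
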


\begin{proof}
(1) \quad Let $g_1,g_2 \in \K(\chi_a)$ one has:\begin{eqnarray}
  \nonumber \langle \chi_a,\chi_b,\chi_c \rangle_{\varphi}(g_1,g_2)&=&(\chi_a\cup\varphi_{b,c}+ \varphi_{a,b}\cup \chi_c)(g_1,g_2) \\
  \nonumber  &=& \chi_a\cup\varphi_{b,c}(g_1,g_2)+\varphi_{a,b}\cup \chi_c(g_1,g_2) \\
  \nonumber  &=& \chi_a(g_1)\cdot \varphi_{b,c}(g_2)+ \varphi_{a,b}(g_1)\cdot \chi_c(g_2)\\
  \nonumber  &=& 0\cdot\varphi_{b,c}(g_2)+\varphi_{a,b}(g_1)\cdot \chi_c(g_2) =\varphi_{a,b}\cup \chi_c(g_1,g_2).
\end{eqnarray}
Thus, $\res_{\K(\chi_a)}(\langle \chi_a,\chi_b,\chi_c \rangle_{\varphi})=\varphi_{a,b}\cup \chi_c\in H^2(\K(\chi_a)).$\\
Statement (2) follows from a similar computation.

(3) \quad Let $g_1,g_2 \in \K(\chi_a)\cap \K(\chi_c)$ one has:
\begin{eqnarray}
  \nonumber \langle \chi_a,\chi_b,\chi_c \rangle_{\varphi}(g_1,g_2)&=&(\chi_a\cup\varphi_{b,c}+ \varphi_{a,b}\cup \chi_c)(g_1,g_2) \\
  \nonumber  &=& \chi_a\cup\varphi_{b,c}(g_1,g_2)+\varphi_{a,b}\cup \chi_c(g_1,g_2) \\
  \nonumber  &=& \chi_a(g_1)\cdot \varphi_{b,c}(g_2)+ \varphi_{a,b}(g_1)\cdot \chi_c(g_2)\\
  \nonumber  &=& 0\cdot\varphi_{b,c}(g_2)+\varphi_{a,b}(g_1)\cdot 0 =0.
\end{eqnarray}
Thus, $\res_{\K(\chi_a)\cap \K(\chi_c)}(\langle \chi_a,\chi_b,\chi_c \rangle_{\varphi})=0$ and we are done.\end{proof}

\begin{prop}\label{ACPR}
Every $\langle \chi_a,\chi_b,\chi_c \rangle_{\varphi}\in \langle \chi_a,\chi_b,\chi_c \rangle$ corresponds to a class $[A]\in \Br(F)$, which is represented by a $\mathbb{Z}/p\mathbb{Z} \times \mathbb{Z}/p\mathbb{Z}$ abelian crossed product, $A$.
\end{prop}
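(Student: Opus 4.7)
The plan is to reduce to the Brauer group via the isomorphism $\Psi$, exhibit a splitting field of the right form using \Pref{split}(3), and then invoke the classical crossed product description of Brauer classes split by a given Galois extension.

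By Kummer theory, the characters $\chi_a,\chi_c$ correspond to classes $[a],[c]\in F^\times/(F^\times)^p$, with $\K(\chi_a)=G_{F(\sqrt[p]{a})}$ and $\K(\chi_c)=G_{F(\sqrt[p]{c})}$. Setting $K:=F(\sqrt[p]{a},\sqrt[p]{c})$, one has $\K(\chi_a)\cap\K(\chi_c)=G_K$, so \Pref{split}(3) yields $\res_{G_K}(M)=0$ for $M:=\langle\chi_a,\chi_b,\chi_c\rangle_\varphi$. Under $\Psi$, this means the Brauer class $[A]:=\Psi(M)\in\Br_p(F)$ lies in $\ker(\Br(F)\to\Br(K))$, i.e.\ $K$ splits $[A]$.

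In the generic case, where $[a],[c]$ are $\mathbb{F}_p$-linearly independent in $F^\times/(F^\times)^p$, the extension $K/F$ is Galois with group $\mathbb{Z}/p\mathbb{Z}\times\mathbb{Z}/p\mathbb{Z}$. By the classical crossed product theorem (the isomorphism $\ker(\Br(F)\to\Br(K))\cong H^2(\Gal(K/F),K^\times)$ realized through the crossed product construction), the class $[A]$ is represented by some $(K/F,\Gal(K/F),f)$ for a $2$-cocycle $f\colon\Gal(K/F)^2\to K^\times$, which is by definition a $\mathbb{Z}/p\mathbb{Z}\times\mathbb{Z}/p\mathbb{Z}$ abelian crossed product of the form recalled at the beginning of Section 3.

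In the degenerate situation where $[a]$ or $[c]$ is trivial, or the two are $\mathbb{F}_p$-proportional, one has $[K:F]<p^2$; one then enlarges $K$ to any $\mathbb{Z}/p\mathbb{Z}\times\mathbb{Z}/p\mathbb{Z}$-extension $L/F$ containing it (such an $L$ exists as soon as $\dim_{\mathbb{F}_p}F^\times/(F^\times)^p\geq 2$; otherwise no relevant non-trivial configuration arises and the claim is vacuous), and the same argument applied to $L$ produces the required abelian crossed product. The main (and only) obstacle is this bookkeeping of degenerate cases; the heart of the proof is the immediate combination of \Pref{split}(3) with the classical splitting-field/crossed-product correspondence.
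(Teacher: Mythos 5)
Your proposal is correct and follows essentially the same route as the paper: push $\langle\chi_a,\chi_b,\chi_c\rangle_\varphi$ into $\Br_p(F)$ via $\Psi$, use Proposition~\ref{split}(3) to see that $F[a^{1/p},c^{1/p}]$ splits the class, and invoke the classical splitting-field/crossed-product correspondence (the paper cites \cite[Corollary 24.37]{Rowen} for exactly this step). Your extra bookkeeping for the degenerate case where $\chi_a,\chi_c$ are $\mathbb{F}_p$-dependent is a point the paper's proof silently elides here and only addresses later, in Section~6, by reducing the main theorem to the linearly independent case.
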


\begin{proof}
Let $[A]\in \Br(F)$ correspond to $\langle \chi_a,\chi_b,\chi_c \rangle_{\varphi}$ under the isomorphism~(\ref{BrH2}).
By Remark \ref{split} we have $\res_{\K(\chi_a)\cap \K(\chi_c)}(\langle \chi_a,\chi_b,\chi_c \rangle_{\varphi})=0$. Thus $[A]$ is split by the field extension corresponding to $\K(\chi_a)\cap \K(\chi_c)$, which is $F[a^{\frac{1}{p}}, c^{\frac{1}{p}}]$ with Galois group isomorphic to $\mathbb{Z}/p\mathbb{Z} \times \mathbb{Z}/p\mathbb{Z}$. Hence by \cite[Corollary 24.37]{Rowen}, $[A]$ has a representative, a $F$-central simple algebra $A$, which is a $\mathbb{Z}/p\mathbb{Z} \times \mathbb{Z}/p\mathbb{Z}$ abelian crossed product.
\end{proof}

\begin{rem}
In \cite{EM}, the authors show that Proposition \ref{ACPR}, together with an old Theorem Albert (see \cite{Alb}), strengthened by Rowen (\cite[Corollary 5]{Row}), has an immediate corollary
stating that when $p=2$, $G_F$ has the Vanishing triple Massey product property.

\end{rem}

\section{Computation of $\varphi_{a,b}$}

Knowing Proposition \ref{ACPR}, was enough for the $p=2$ case. However, when $p\geq 3$ there is no generalization of Albert's Theorem.
In fact, works of Tignol and McKinnie (see \cite{Tig}, \cite{Mck} respectively), show that for $p\geq 3$ there exist indecomposable abelian crossed products of
exponent $p$ and degree $p^2$, thus can not be similar to the tensor product of two degree $p$ symbol algebras.
Hence, in order to solve the general case we need to get more information on the triple Massey product.
In particular, we will need to better understand $\varphi_{a,b},\varphi_{b,c}$ in the definition of the triple Massey product. To this end we assume
$$\chi_a\cup \chi_b=0,$$ and find explicit $\varphi_{a,b}\in C^1(G_F)$ such that $$\partial( \varphi_{a,b})=\chi_a\cup \chi_b.$$

Let $a,b\in F^{\times}$ be such that their corresponding Kummer characters, $\chi_a,\chi_b$ are linearly independent in $H^2(G_F).$
Assume that $\chi_a\cup \chi_b=0$ in $H^2(G_F)$. Let $$F_a=F[x_a|x_a^p=a] \ \ \ , \ \ \ F_b=F[x_b|x_b^p=b]$$ be the Kummer extensions with Galois groups
$$G_a=\langle\sigma_a \rangle\cong \mathbb{Z}/p\mathbb{Z} \hbox{ \  , \ } G_b=\langle\sigma_b\rangle\cong \mathbb{Z}/p\mathbb{Z};$$
where we have $\sigma_a(x_a)=\rho x_a$ and $\sigma_b(x_b)=\rho x_b$.\\
Let $L=F_aF_b$ with Galois group $$G_{ab}=\langle \sigma_a, \sigma_b \rangle \cong \mathbb{Z}/p\mathbb{Z}\times \mathbb{Z}/p\mathbb{Z}.$$
By Wedderburn \cite[Remark 24.46]{Rowen} we know there exists $v\in F_b$ such that $\N_{F_b/F}(v)=a$.
For $i=0,1,...,p-1$ let $v_i=\sigma_b^i(v)$ and define $$w=\prod_{i=0}^{p-1} v_i^i \in F_b.$$
Note that by Proposition \ref{prop3} we have:$$\sigma_b(w)=\frac{v_0^p}{a}w \hbox{ \  and  \ } \sigma_a(w)=w.$$
We want to construct a Galois extension containing $L$ and $w^{\frac{1}{p}}$.
\begin{prop}
$w$ is not a $p$-th power in $L$.
\end{prop}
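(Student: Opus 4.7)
The plan is to argue by contradiction: assume $w = y^p$ for some $y \in L^\times$, and use the two Galois-theoretic relations $\sigma_a(w) = w$ and $\sigma_b(w)/w = v_0^p/a$ from \Pref{prop3} to force $a$ to be a $p$-th power in $F_b$, contradicting the linear independence of $\chi_a$ and $\chi_b$ (which I read as the intended hypothesis in $H^1(G_F)$).

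First I would exploit $\sigma_a$-invariance. Raising $\sigma_a(y)^p = \sigma_a(w) = w = y^p$ to the $p$-th root inside $L$ forces $\sigma_a(y) = \rho^j y$ for some $j \in \{0, 1, \dots, p-1\}$. Writing $y$ in the $F_b$-basis $\{1, x_a, \dots, x_a^{p-1}\}$ of $L$ and using that $\sigma_a$ acts on $x_a^k$ by multiplication by $\rho^k$, the $\rho^j$-eigenspace of $\sigma_a$ on $L$ is exactly the $F_b$-line $x_a^j F_b$. Hence $y = x_a^j z$ with $z \in F_b^\times$, which yields
\[ w = a^j z^p, \qquad \text{so} \qquad w \equiv a^j \pmod{(F_b^\times)^p}. \]

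Next I would evaluate $\sigma_b(w)/w$ modulo $(F_b^\times)^p$ in two ways. On one hand, since $a \in F$ is fixed by $\sigma_b$, applying $\sigma_b$ to the displayed congruence gives $\sigma_b(w) \equiv a^j \equiv w$, so $\sigma_b(w)/w \in (F_b^\times)^p$. On the other hand, \Pref{prop3} gives $\sigma_b(w)/w = v_0^p/a \equiv a^{-1} \pmod{(F_b^\times)^p}$. Combining these forces $a \in (F_b^\times)^p$. By Kummer theory, this is equivalent to $\chi_a$ being trivial on $\K(\chi_b) \leq G_F$, and hence to $\chi_a$ lying in the subgroup of $H^1(G_F)$ generated by $\chi_b$; this contradicts the assumed linear independence of $\chi_a$ and $\chi_b$.

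The main obstacle I anticipate is the first step: one must translate ``$w$ is a $p$-th power in $L$'' into a congruence inside the smaller field $F_b$. The $\sigma_a$-eigenspace decomposition of $L$ over $F_b$ is the right tool; once $y$ is pinned down as $x_a^j z$, the rest of the argument is simply matching the two formulas for $\sigma_b(w)/w$ modulo $(F_b^\times)^p$.
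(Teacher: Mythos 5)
Your proof is correct and follows essentially the same route as the paper: both extract from $\sigma_a(y)^p=y^p$ that $y=x_a^jz$ with $z\in F_b$, and then play the formula $\sigma_b(w)/w=(v_0/x_a)^p$ against this to reach a contradiction with $x_a\notin F_b$ (equivalently, $a\notin (F_b^\times)^p$). The only difference is cosmetic: the paper compares the two computations of $\sigma_b(t)/t$ as elements of the disjoint cosets $F_b$ and $x_a^{-1}F_b$, while you compare $\sigma_b(w)/w$ modulo $(F_b^\times)^p$ and invoke Kummer theory, which is the same contradiction in multiplicative disguise.
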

\begin{proof}
Assume on the contrary that $w=t^p$ for some $t\in L$.
We compute:
$$\sigma_b(t)^p = \sigma_b(w)=\frac{v_0^p}{a}w =\Big(\frac{v_0}{x_a}\Big)^pw=\Big(\frac{v_0}{x_a}\Big)^pt^p.$$
Thus we get $$\Big( \frac{\sigma_b(t)}{t}\Big) ^p= \Big( \frac{v_0}{x_a}\Big) ^p. $$
Now by our assumption $L$ is a field, hence $$\frac{\sigma_b(t)}{t}=\rho^i \frac{v_0}{x_a}$$ for some $i$.
In particular \begin{equation}\label{1} \frac{\sigma_b(t)}{t}\in x_a^{-1}F_b. \end{equation}
On the other hand we have:
$$\sigma_a(t)^p = \sigma_a(w)=w=t^p.$$
Thus we get: $\Big(\frac{\sigma_a(t)}{t} \Big)^p=1$.
Again by our assumption $L$ is a field, hence $$\frac{\sigma_a(t)}{t}=\rho^j$$ for some $j$.
Hence, $t=x_a^ju$ for $u\in L^{\sigma_a}=F_b$ as an eigenvector of $\sigma_a$ with eigenvalue $\rho^j$. Now we compute:
$$\frac{\sigma_b(t)}{t}=\frac{\sigma_b(x_a^ju)}{x_a^ju}=\frac{\sigma_b(u)}{u}.$$
Thus \begin{equation}\label{2} \frac{\sigma_b(t)}{t}\in F_b. \end{equation}
But as $x_a\notin F_b$ we get a contradiction with (1) and we are done.\end{proof}

\begin{cor}
$K=L[x_w|x_w^p=w]$ is a field extension of $F$ of dimension $p^3$.
\end{cor}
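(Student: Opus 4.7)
The plan is to view this as a direct consequence of the preceding proposition combined with Kummer theory. Since $L$ contains the primitive $p$-th root of unity $\rho$, the polynomial $y^p - w \in L[y]$ is irreducible over $L$ if and only if $w$ is not a $p$-th power in $L$; this is the standard Kummer-theoretic dichotomy, since any two roots of $y^p - w$ differ by a power of $\rho$, so the splitting field coincides with $L(w^{1/p})$ and has degree $1$ or $p$. By the preceding proposition $w \notin (L^\times)^p$, so $y^p - w$ is irreducible over $L$, and hence $K = L[x_w]$ is a field with $[K:L] = p$.

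To finish, I would verify that $[L:F] = p^2$. This follows from the assumption that $\chi_a$ and $\chi_b$ are linearly independent in $H^1(G_F)$: under the Kummer isomorphism their classes $[a], [b] \in F^\times / (F^\times)^p$ are linearly independent over $\mathbb{F}_p$, which forces the two degree-$p$ Kummer extensions $F_a$ and $F_b$ to be distinct, so their compositum $L = F_a F_b$ has degree $p^2$ over $F$. Multiplicativity of degrees in the tower $F \subseteq L \subseteq K$ then yields
\[
[K:F] \;=\; [K:L] \cdot [L:F] \;=\; p \cdot p^2 \;=\; p^3,
\]
which is the desired dimension.

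The real content of the corollary is the non-$p$-th-power statement for $w \in L$, which has already been handled in the preceding proposition; no further obstacle arises here beyond invoking the Kummer-theoretic irreducibility criterion, which is entirely standard.
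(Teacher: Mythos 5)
Your proof is correct and is exactly the argument the paper intends: the paper leaves this corollary without proof, treating it as immediate from the preceding proposition via the standard fact that for $p$ prime and $\rho\in L$, $X^p-w$ is irreducible over $L$ precisely when $w\notin (L^\times)^p$, together with $[L:F]=p^2$ from the linear independence of $\chi_a,\chi_b$. No issues.
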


\begin{thm}
$K/F$ is Galois.
\end{thm}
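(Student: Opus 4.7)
The plan is to show $K/F$ is separable and normal. Separability is immediate since $K/L$ and $L/F$ are Kummer extensions, hence separable; the real content is normality. Since $L/F$ is already Galois with $\Gal(L/F)=\langle\sigma_a,\sigma_b\rangle$ and $K=L(x_w)$ is a degree-$p$ Kummer extension of $L$, the standard Kummer-theoretic criterion reduces normality of $K/F$ to showing that $\sigma(w)/w\in(L^{\times})^p$ for every $\sigma\in\Gal(L/F)$: this says that the class $[w]\in L^{\times}/(L^{\times})^p$ is fixed by $\Gal(L/F)$, and hence that every $\sigma$ lifts to an automorphism of $K$.

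The set of $\sigma\in\Gal(L/F)$ satisfying $\sigma(w)/w\in(L^{\times})^p$ is a subgroup (a one-line cocycle check), so it is enough to verify the condition on the generators. For $\sigma_a$ it is immediate from $\sigma_a(w)=w$. For $\sigma_b$ the formula $\sigma_b(w)=(v_0^p/a)w$ already recorded, combined with $a=x_a^p$ and $x_a\in L$, gives
\[
\sigma_b(w)/w=\bigl(v_0/x_a\bigr)^p\in(L^{\times})^p.
\]

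To finish, for each $\sigma\in\Gal(L/F)$ pick $\alpha_\sigma\in L$ with $\sigma(w)=\alpha_\sigma^p w$, and extend $\sigma$ to a map $\tilde\sigma\colon K\to K$ by $\tilde\sigma(x_w)=\alpha_\sigma x_w$; the identity $\tilde\sigma(x_w)^p=\alpha_\sigma^p w=\sigma(w)$ shows $\tilde\sigma$ is a well-defined $F$-automorphism of $K$. Together with the Kummer automorphism $\tau\in\Gal(K/L)$ given by $\tau(x_w)=\rho x_w$, these generate a subgroup $H\leq\operatorname{Aut}(K/F)$ that surjects onto $\Gal(L/F)$ and whose kernel contains $\langle\tau\rangle\isom\Z/p\Z$, so $|H|\geq p^2\cdot p=p^3$. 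Since $|\operatorname{Aut}(K/F)|\leq[K\!:\!F]=p^3$ by the preceding corollary, $|H|=[K\!:\!F]$, which is exactly the Galois condition.

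The only step carrying real content is the identity $\sigma_b(w)/w=(v_0/x_a)^p$, and this is precisely where the hypothesis $\chi_a\cup\chi_b=0$ enters the argument: it is what produced $v\in F_b$ with $\N_{F_b/F}(v)=a$, from which Proposition~\ref{prop3} supplied the formula $\sigma_b(w)=(v_0^p/a)w$. Everything else is Kummer theory and a dimension count.
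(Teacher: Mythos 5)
Your proof is correct and takes essentially the same route as the paper: both arguments hinge on the identity $\sigma_b(w)/w=(v_0/x_a)^p\in(L^{\times})^p$ (with $\sigma_a(w)=w$ handling the other generator), which lets the generators of $\Gal(L/F)$ lift to $K$ and, combined with $\tau$, yields $p^3$ distinct $F$-automorphisms. Your packaging via the Kummer-theoretic criterion and the restriction-map count is just a slightly more systematic bookkeeping of the paper's explicit construction.
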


\begin{proof}
To prove the theorem we need to produce $p^3$ distinct $F$-automorphisms of $K$.
Clearly we have the automorphism $\tau$ defined by $$\tau(x_w)=\rho x_w, \ \ \ \tau(x_a)=x_a, \ \ \ \tau(x_b)=x_b.$$
Also as $w\in F_b$ we have $K\cong F_b[x_w]\otimes F_a$. Thus we can extend $\sigma_a$ to $K$ by setting $\sigma_a(x_w)=x_w$.

The hard part is to extend $\sigma_b$ from $L$ to $K$.
However as we know $$\sigma_b(w)=\frac{v_0^p}{a}w=\Big(\frac{v_0}{x_a}\Big)^pw,$$ we get that the minimal polynomial of $\sigma_b(x_w)$ over $L$ in a Galois closure is the conjugate by $\sigma_b$ of the minimal polynomial of $x_w$, namely, $$\sigma_b(X^p-w)=X^p-\Big(\frac{v_0}{x_a}\Big)^pw.$$
Thus, all we have to do is send $x_w$ to a root of $X^p-\Big(\frac{v_0}{x_a}\Big)^pw$.
Now as the roots of $X^p-\Big(\frac{v_0}{x_a}\Big)^pw$ are: $$\{\rho^i \frac{v_0}{x_a}x_w|i=0,1,...,p-1\};$$ all we need to do is choose one of them.
We choose to extend $\sigma_b$ by setting $$\sigma_b(x_w)=\frac{v_0}{x_a}x_w.$$
Thus we have produced all the necessary automorphisms and we are done.
\end{proof}

\begin{prop}
$\Gal(K/F)=\langle\sigma_a,\sigma_b,\tau \rangle$ which we denote by $G$, has the following relations:
\begin{enumerate}
\item $\sigma_b\sigma_a=\tau \sigma_a\sigma_b$.
\item The center, $\Cent(G)$, of $G$ is $\langle \tau\rangle$.
\end{enumerate}
\end{prop}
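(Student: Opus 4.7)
The plan is to verify both statements by direct computation on the three generators $x_a,x_b,x_w$ of $K$, using the explicit formulas for $\sigma_a,\sigma_b,\tau$ established in the preceding theorem and the fact that $v_0=v\in F_b$ so $\sigma_a(v_0)=v_0$.

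For (1), I would compare $\sigma_b\sigma_a$ with $\tau\sigma_a\sigma_b$ on each generator. On $x_a$: both sides give $\rho x_a$, since $\tau$ fixes $x_a$ and $\sigma_b$ fixes $x_a$. On $x_b$: both sides give $\rho x_b$, by the symmetric reason. On $x_w$: since $\sigma_a(x_w)=x_w$, the left side equals $\sigma_b(x_w)=\frac{v_0}{x_a}x_w$; the right side is computed as
\[
\tau\sigma_a\sigma_b(x_w)=\tau\sigma_a\!\left(\tfrac{v_0}{x_a}x_w\right)=\tau\!\left(\tfrac{v_0}{\rho x_a}x_w\right)=\tfrac{v_0}{x_a}x_w,
\]
using $\sigma_a(v_0)=v_0$, $\sigma_a(x_a)=\rho x_a$, $\sigma_a(x_w)=x_w$, and $\tau(x_w)=\rho x_w$. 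So the identity holds on all generators, hence on $K$.

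For (2), I would first show $\tau\in\mathrm{Cent}(G)$ by a parallel check: $\tau$ fixes $x_a$ and $x_b$, so it commutes with $\sigma_a$ and $\sigma_b$ on those, while on $x_w$ both $\tau\sigma_a$ and $\sigma_a\tau$ send $x_w$ to $\rho x_w$, and both $\tau\sigma_b$ and $\sigma_b\tau$ send $x_w$ to $\rho\frac{v_0}{x_a}x_w$. Moreover $\tau$ has order $p$, since $\tau(x_w)=\rho x_w$ and $\tau$ fixes $x_a,x_b$. Thus $\langle\tau\rangle$ is a subgroup of the center of order $p$.

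To obtain equality, note $|G|=p^3$ because $[K:F]=p^3$, and (1) exhibits a nontrivial commutator $[\sigma_b,\sigma_a]=\tau$, so $G$ is nonabelian. A standard fact about $p$-groups is that $G/\mathrm{Cent}(G)$ is never a nontrivial cyclic group; combined with $\mathrm{Cent}(G)\neq1$, this forces $|\mathrm{Cent}(G)|=p$ for any nonabelian group of order $p^3$. Hence $\mathrm{Cent}(G)=\langle\tau\rangle$. There is no serious obstacle here: the only delicate point is that the particular extension of $\sigma_b$ to $K$ chosen in the previous theorem (sending $x_w$ to $\frac{v_0}{x_a}x_w$ rather than some other root) is exactly what produces the commutator $\tau$ in (1); once that calculation is recorded, the centre computation is forced by the structure theory of $p$-groups of order $p^3$.
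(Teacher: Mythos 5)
Your proof is correct. Part (1) and the inclusion $\langle\tau\rangle\subseteq\Cent(G)$ are verified exactly as in the paper, by checking the identities on the generators $x_a,x_b,x_w$ of $K$ over $F$; your computations agree with the paper's. Where you genuinely diverge is the reverse inclusion $\Cent(G)\subseteq\langle\tau\rangle$: the paper writes a general central element in the normal form $\sigma_b^i\sigma_a^j\tau^k$ and computes $\sigma_b g$ versus $g\sigma_b$ (using the commutation relation from (1)) to force $j=0$, and symmetrically $i=0$. You instead observe that $|G|=[K:F]=p^3$, that (1) exhibits the nontrivial commutator $[\sigma_b,\sigma_a]=\tau$ so $G$ is nonabelian, and then invoke the standard fact that $G/\Cent(G)$ cannot be nontrivial cyclic to conclude $|\Cent(G)|=p$, whence $\Cent(G)=\langle\tau\rangle$ since $\tau$ has order $p$. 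This is a clean and valid shortcut: it avoids having to justify the normal form $\sigma_b^i\sigma_a^j\tau^k$ for arbitrary elements of $G$ (which the paper uses implicitly), at the mild cost of appealing to the structure theory of groups of order $p^3$ rather than staying entirely within explicit Galois-theoretic computation. Either route is acceptable; yours is arguably the more economical for part (2).
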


\begin{proof}
(1)   \quad
        It is enough to check (1) on the generators of $K$ over $F$.
        We have: \begin{enumerate}
                     \item[(i)] $\sigma_b\sigma_a(x_b)=\sigma_b(x_b)=\rho x_b=\tau(\rho x_b)=\tau\sigma_a(\rho x_b)=\tau\sigma_a\sigma_b(x_b).$
                     \item[(ii)] $\sigma_b\sigma_a(x_a)=\sigma_b(\rho x_a)=\rho x_a=\tau(\rho x_a)=\tau\sigma_a(x_a)=\tau\sigma_a\sigma_b(x_a).$
                     \item[(iii)] $\sigma_b\sigma_a(x_w)=\sigma_b(x_w)=\frac{v_0}{x_a}x_w=\tau(\rho^{-1}\frac{v_0}{x_a}x_w)=\tau\sigma_a(\frac{v_0}{x_a}x_w)=
                    \tau\sigma_a\sigma_b(x_w).$
                 \end{enumerate}
        Thus we have proved that $\sigma_b\sigma_a=\tau \sigma_a\sigma_b$.\\
(2) \quad
        First we show $\langle \tau\rangle \subseteq \Cent(G)$:\\
        Again, we check on the generators of $K$ over $F$.\\
        We first show: $\sigma_a\tau=\tau\sigma_a.$
        \begin{enumerate}
            \item[(i)] $\sigma_a\tau(x_a)=\sigma_a(x_a)=\rho x_a=\tau(\rho x_a)=\tau\sigma_a(x_a).$ \\ The same computation shows that $\sigma_a\tau(x_b)=\tau\sigma_a(x_b).$
            \item[(ii)] $\sigma_a\tau(x_w)=\sigma_a(\rho x_w)=\rho x_w=\tau(x_w)=\tau\sigma_a(x_w).$
        \end{enumerate}
    Thus we have proved that $\sigma_a\tau=\tau \sigma_a.$ \\
    As for $\sigma_b\tau=\tau\sigma_b$ we compute:
    \begin{enumerate}
        \item[(i)] The exact same computation as above shows that \\ $\sigma_b\tau(x_a)=\tau\sigma_b(x_a)$ and $\sigma_b\tau(x_b)=\tau\sigma_b(x_b).$
        \item[(ii)] $\sigma_b\tau(x_w)=\sigma_b(\rho x_w)=\rho \frac{v_0}{x_a}x_w=\tau(\frac{v_0}{x_a}x_w)=\tau\sigma_b(x_w).$
    \end{enumerate}
    Thus we proved that $\sigma_b\tau=\tau\sigma_b$, that is $\langle\tau\rangle \subseteq \Cent(G)$.
    We now move to proving $\Cent(G)\subseteq \langle \tau\rangle.$
    To this end let $g=\sigma_b^i\sigma_a^j\tau^k\in \Cent(G)$, then we have, $\sigma_bg=g\sigma_b$. But on the one hand $$\sigma_bg=\sigma_b^{i+1}\sigma_a^j\tau^k.$$
    On the other hand we have: $$ g\sigma_b= \sigma_b^i\sigma_a^j\tau^k\sigma_b=\sigma_b^{i+1}\sigma_a^j\tau^{k-j}.$$ Thus we get $j=0$. A similar computation shows that $i=0$.
    Hence we showed $\Cent(G)\subseteq \langle \tau\rangle$ and part (2) follows.\end{proof}

\begin{rem}\label{proj}
There are projections:
$$\pi_a:G\rightarrow G_a \ \ \ \hbox{such that} \ \ \ \pi_a(\sigma_a)=\sigma_a \ \ \ \hbox{with kernel} \ \ \ \langle \sigma_b, \tau \rangle.$$
$$\pi_b:G\rightarrow G_b \ \ \ \hbox{such that} \ \ \ \pi_b(\sigma_b)=\sigma_b \ \ \ \hbox{with kernel} \ \ \ \langle \sigma_a, \tau \rangle.$$
\end{rem}

We are now ready to define a map $$\varphi_{a,b}\in C^1(G) \ \ \ \hbox{such that} \ \ \ \partial(\varphi_{a,b})=\chi_a\cup \chi_b \in C^2(G).$$
Let $\chi_a,\chi_b \in H^1(G)$ be the inflations to $G$, with respect to $\pi_a,\pi_b$, of the
Kummer characters $\chi_a\in H^1(G_a),\chi_b\in H^1(G_b)$ corresponding to $a,b\in F^{\times}$.
\begin{rem}
Note that by the above we have: $$\K(\chi_a)=\langle \sigma_b, \tau \rangle \ \ \ ; \ \ \ \K(\chi_b)=\langle \sigma_a, \tau \rangle .$$
\end{rem}

\begin{lem}
Consider $\chi_a,\chi_b$ as elements of $C^1(G).$ \\
Let $g_1=\sigma_b^i\sigma_a^j\tau^k, \ g_2=\sigma_b^r\sigma_a^s\tau^t \in G.$ We have:
$$\chi_a\cup\chi_b(g_1,g_2)=jr  \ \ \ \hbox{in} \ C^2(G).$$
\end{lem}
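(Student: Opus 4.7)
The plan is to unwind the definition of the cup product of $1$-cochains and evaluate each factor through the projections $\pi_a, \pi_b$ of Remark~\ref{proj}.

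First, for inhomogeneous $1$-cochains on a profinite group acting trivially on $\mathbb{Z}/p\mathbb{Z}$, the cup-product formula reads
\[ (\chi\cup \chi')(g_1,g_2) = \chi(g_1)\,\chi'(g_2), \]
exactly as already used in the proof of Proposition~\ref{split}. So the task reduces to computing $\chi_a(g_1)$ and $\chi_b(g_2)$ individually and multiplying.

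By hypothesis $\chi_a \in H^1(G)$ is the inflation along $\pi_a : G \twoheadrightarrow G_a$ of the Kummer character on $G_a = \langle \sigma_a\rangle$ normalized so that $\sigma_a \mapsto 1$. Since $\ker \pi_a = \langle \sigma_b, \tau\rangle$ by Remark~\ref{proj}, we get $\pi_a(\sigma_b^i \sigma_a^j \tau^k) = \sigma_a^j$, whence $\chi_a(g_1) = j$. The symmetric computation using $\pi_b$, whose kernel is $\langle \sigma_a, \tau\rangle$, gives $\chi_b(g_2) = r$. Substituting into the cup-product formula yields $(\chi_a \cup \chi_b)(g_1,g_2) = jr$, as claimed.

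There is no real obstacle here; the only thing that needs to be checked is that $\sigma_b$ and $\tau$ genuinely die under $\pi_a$ (and symmetrically $\sigma_a, \tau$ under $\pi_b$), which is precisely the kernel content of Remark~\ref{proj}. The asymmetry between the indices $j$ (coming from $g_1$) and $r$ (coming from $g_2$) in the final formula is what will make the subsequent task tractable: the lemma reduces the $2$-cocycle $\chi_a \cup \chi_b$ to a completely explicit bilinear-in-exponents expression on $G$, and it is this concrete form that will allow one to exhibit a $\varphi_{a,b} \in C^1(G)$ with $\partial \varphi_{a,b} = \chi_a \cup \chi_b$ by reading off its values on the generators $\sigma_a, \sigma_b, \tau$.
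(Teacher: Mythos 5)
Your proof is correct and follows essentially the same route as the paper: apply the cup-product formula $(\chi_a\cup\chi_b)(g_1,g_2)=\chi_a(g_1)\chi_b(g_2)$ and evaluate each character through the projections $\pi_a,\pi_b$ of Remark~\ref{proj} to obtain $\chi_a(g_1)=j$ and $\chi_b(g_2)=r$. Nothing is missing.
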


\begin{proof}
We compute: \\ $\chi_a\cup\chi_b(g_1,g_2)= \chi_a(g_1)\chi_b(g_2)= \chi_a(\pi_a(g_1))\chi_b(\pi_b(g_2))=$\\$\chi_a(\sigma_a^j)\chi_b(\sigma_b^r)=jr.$
\end{proof}
Define $\varphi_{a,b}\in C^1(G)$ via $$\varphi_{a,b}(\sigma_b^i\sigma_a^j\tau^k)=k.$$
\begin{prop}\label{cup}
$\partial (\varphi_{a,b})=\chi_a\cup\chi_b$ in $C^2(G)$, that is $\chi_a\cup\chi_b=0$ in $H^2(G).$
\end{prop}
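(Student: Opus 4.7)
The plan is to verify the identity by a direct normal-form computation, using the coboundary formula for a $1$-cochain with trivial $G$-action, namely $\partial\varphi(g_1,g_2)=\varphi(g_2)-\varphi(g_1g_2)+\varphi(g_1)$, and then matching the result against the formula $\chi_a\cup\chi_b(g_1,g_2)=jr$ established in the preceding lemma.

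The main step is to express $g_1g_2$ in the normal form $\sigma_b^{?}\sigma_a^{?}\tau^{?}$ on which $\varphi_{a,b}$ is defined. I would begin from the relation $\sigma_b\sigma_a=\tau\sigma_a\sigma_b$, which can be rewritten as $\sigma_a\sigma_b=\tau^{-1}\sigma_b\sigma_a$. Together with the centrality of $\tau$ (established in the previous proposition), a short induction on exponents gives
\[
\sigma_a^{j}\sigma_b^{r}=\tau^{-jr}\sigma_b^{r}\sigma_a^{j}
\]
for all $j,r\geq 0$. Starting from
\[
g_1g_2=\sigma_b^{i}\sigma_a^{j}\tau^{k}\,\sigma_b^{r}\sigma_a^{s}\tau^{t}
=\sigma_b^{i}\,\sigma_a^{j}\sigma_b^{r}\,\sigma_a^{s}\,\tau^{k+t},
\]
the displayed commutation relation yields
\[
g_1g_2=\sigma_b^{i+r}\sigma_a^{j+s}\tau^{k+t-jr}.
\]
This also uses implicitly that every element of $G$ has a unique expression $\sigma_b^{i}\sigma_a^{j}\tau^{k}$ with $0\le i,j,k\le p-1$, which follows from $|G|=p^3$ together with the relations already derived.

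Reading off the value, $\varphi_{a,b}(g_1g_2)=k+t-jr\pmod{p}$, so
\[
\partial\varphi_{a,b}(g_1,g_2)=\varphi_{a,b}(g_2)-\varphi_{a,b}(g_1g_2)+\varphi_{a,b}(g_1)=t-(k+t-jr)+k=jr,
\]
which by the preceding lemma equals $\chi_a\cup\chi_b(g_1,g_2)$. Since this holds for arbitrary $g_1,g_2\in G$, we get $\partial\varphi_{a,b}=\chi_a\cup\chi_b$ in $C^2(G)$, and in particular $\chi_a\cup\chi_b=0$ in $H^2(G)$.

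There is essentially no obstacle: the heart of the argument is just checking that the exponent of $\tau$ that is produced when rewriting the product $\sigma_a^{j}\sigma_b^{r}$ is precisely $-jr$, and this is forced by the single commutator relation $[\sigma_b,\sigma_a]=\tau$ once $\tau$ is known to be central. The only thing to be careful about is working consistently modulo $p$ in the exponents and using the correct sign convention in the coboundary; after that, the two sides of the asserted equality coincide on every pair $(g_1,g_2)$.
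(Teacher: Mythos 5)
Your proof is correct and follows essentially the same route as the paper: apply the coboundary formula for a $1$-cochain with trivial action, rewrite $g_1g_2$ in the normal form $\sigma_b^{i+r}\sigma_a^{j+s}\tau^{k+t-jr}$ using the relation $\sigma_b\sigma_a=\tau\sigma_a\sigma_b$ and the centrality of $\tau$, and read off $\partial\varphi_{a,b}(g_1,g_2)=jr=\chi_a\cup\chi_b(g_1,g_2)$. The only difference is that you make explicit the induction giving $\sigma_a^{j}\sigma_b^{r}=\tau^{-jr}\sigma_b^{r}\sigma_a^{j}$, which the paper leaves implicit.
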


\begin{proof}
Let $g_1,g_2\in G$ be as above. We compute:\begin{eqnarray}
\nonumber \partial(\varphi_{a,b})(g_1,g_2)&=& \partial(\varphi_{a,b})(\sigma_b^i\sigma_a^j\tau^k,\sigma_b^r\sigma_a^s\tau^t)\\
\nonumber &=& \varphi_{a,b}(\sigma_b^i\sigma_a^j\tau^k)+\varphi_{a,b}(\sigma_b^r\sigma_a^s\tau^t)-
                \varphi_{a,b}(\sigma_b^i\sigma_a^j\tau^k\sigma_b^r\sigma_a^s\tau^t) \\
\nonumber &=& k+t-\varphi_{a,b}(\sigma_b^{i+r}\sigma_a^{j+s}\tau^{k+t-rj})=rj\\
\nonumber &=& \chi_a\cup\chi_b(g_1,g_2).
\end{eqnarray}
Thus $\partial (\varphi_{a,b})=\chi_a\cup\chi_b$ in $C^2(G).$
\end{proof}

\begin{cor}
Let $\chi_w\in H^1(\K(\chi_b))$ be the Kummer character corresponding to $w\in F_b$, that is
$$\chi_w(\sigma_a)=0 \ \ \ ; \ \ \ \chi_w(\tau)=1$$ and consider it also as an element of $C^1(\K(\chi_b)).$
We have that $$\res_{\K(\chi_b)}(\varphi_{a,b})=\chi_w \in H^1(\K(\chi_b)).$$
\end{cor}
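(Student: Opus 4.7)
The plan is to verify the equality at the level of $1$-cochains on the finite group $\K(\chi_b)=\langle\sigma_a,\tau\rangle$, by checking that both sides agree on generators. First I would observe that the restriction $\res_{\K(\chi_b)}(\varphi_{a,b})$ is genuinely a $1$-cocycle, hence a homomorphism $\K(\chi_b)\to\mathbb{Z}/p\mathbb{Z}$: by \Pref{cup} we have $\partial(\varphi_{a,b})(g_1,g_2)=\chi_a(g_1)\chi_b(g_2)$, and $\chi_b$ vanishes identically on $\K(\chi_b)$, so this coboundary dies after restriction. The character $\chi_w$ is a homomorphism on $\K(\chi_b)$ by construction. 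Thus it suffices to compare the two maps on the two generators $\sigma_a$ and $\tau$.

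Next I would pin down the Kummer-theoretic meaning of $\chi_w$ as a character of $\K(\chi_b)$. The fixed field of $\langle\sigma_a,\tau\rangle$ in $K$ is $F[x_b]=F_b$, since $\sigma_a$ and $\tau$ both fix $x_b$, while $\sigma_a$ moves $x_a$ and $\tau$ moves $x_w$. Hence $K/F_b$ is Galois with group $\K(\chi_b)$, and $x_w$ is a Kummer generator satisfying $x_w^p=w\in F_b$. By definition $\chi_w(g)$ is the residue $k\pmod p$ for which $g(x_w)=\rho^k x_w$. Reading this off from the explicit description of the automorphisms of $K$, I get $\sigma_a(x_w)=x_w$ and $\tau(x_w)=\rho x_w$, so $\chi_w(\sigma_a)=0$ and $\chi_w(\tau)=1$, matching the values stated in the corollary.

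Finally I would evaluate $\varphi_{a,b}$ on the same two generators using its definition $\varphi_{a,b}(\sigma_b^i\sigma_a^j\tau^k)=k$: writing $\sigma_a=\sigma_b^0\sigma_a^1\tau^0$ and $\tau=\sigma_b^0\sigma_a^0\tau^1$, I read off $\varphi_{a,b}(\sigma_a)=0$ and $\varphi_{a,b}(\tau)=1$. Since two homomorphisms $\K(\chi_b)\to\mathbb{Z}/p\mathbb{Z}$ that agree on the generators $\sigma_a,\tau$ coincide, we conclude $\res_{\K(\chi_b)}(\varphi_{a,b})=\chi_w$ in $H^1(\K(\chi_b))$.

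There is no real obstacle here beyond bookkeeping; the corollary is a direct unpacking of the defining formula for $\varphi_{a,b}$ against the explicit action of $\sigma_a,\sigma_b,\tau$ on $x_w$ constructed in the preceding theorem. The one conceptual input worth emphasizing is the identification of $\K(\chi_b)$ with $\Gal(K/F_b)$, which turns $\chi_w$ from the abstract Kummer character attached to $w\in F_b^{\times}$ into a concrete character on $\K(\chi_b)$ computable on the chosen generators.
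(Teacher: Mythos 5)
Your proof is correct and follows essentially the same route as the paper: the paper simply evaluates $\varphi_{a,b}$ on a general element $\sigma_a^i\tau^j$ of $\K(\chi_b)$ and reads off the value $j=\chi_w(\sigma_a^i\tau^j)$ directly from the definition $\varphi_{a,b}(\sigma_b^i\sigma_a^j\tau^k)=k$, whereas you check only the generators and supply the (correct) observation that both sides are homomorphisms because $\chi_b$ vanishes on $\K(\chi_b)$. The difference is purely one of bookkeeping; both arguments are the same direct unpacking.
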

\begin{proof}
Let $g=\sigma_a^i\tau^j \in \K(\chi_b)$, we compute:
\begin{eqnarray}
\nonumber \res_{\K(\chi_b)}(\varphi_{a,b})(\sigma_a^i\tau^j)&=&\varphi_{a,b}(\sigma_b^0\sigma_a^i\tau^j)\\
\nonumber &=&j=\chi_w(\sigma_a^i\tau^j).
\end{eqnarray}
Thus $\res_{\K(\chi_b)}(\varphi_{a,b})=\chi_w \in H^1(\K(\chi_b)).$
\end{proof}

\section{Proof of the main Theorem }
In this section we will prove Theorem \ref{Main} stating that for any prime $p$, $G_F$  has the Vanishing triple product property, using the previous sections.

Recall that we are given $ \chi_a,\chi_b,\chi_c\in H^1(G_F) $ such that  $$\chi_a\cup\chi_b=\chi_b\cup\chi_c=0.$$
Then, there exist $\varphi_{a,b},\varphi_{b,c}\in C^1(G_F)$ such that
$$\partial(\varphi_{a,b})=\chi_a\cup \chi_b \ \ \ \hbox{and} \ \ \ \partial(\varphi_{b,c})=\chi_b\cup \chi_c$$ in $\C^2(G_F)$. Hence,
$$\langle \chi_a,\chi_b,\chi_c \rangle_{\varphi}= \chi_a\cup\varphi_{b,c}+ \varphi_{a,b}\cup \chi_c$$ is in $H^2(G_F)$, and
$$\langle \chi_a,\chi_b,\chi_c \rangle=\langle \chi_a,\chi_b,\chi_c \rangle_{\varphi}+\chi_a\cup H^1(G_F)+H^1(G_F)\cup \chi_c.$$

In order to use section $3$, we need to assume $\chi_a,\chi_c$ are linearly independent, and in order to use section $5$ for $\chi_b\cup \chi_c=0$ to find $\varphi_{b,c}$ we want to assume the last two slots ($\chi_b,\chi_c$) are linearly independent.

\begin{prop}
It is enough to prove Theorem \ref{Main} for the case: $\chi_a,\chi_c$ are linearly independent, and $\chi_b,\chi_c$ are linearly independent.
\end{prop}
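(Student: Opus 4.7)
The plan is to show that whenever the stated independence hypothesis fails, $\langle \chi_a,\chi_b,\chi_c\rangle$ already contains $0$ for formal reasons, so \Tref{Main} in the restricted case immediately implies it in general. If any of $\chi_a,\chi_b,\chi_c$ is zero, the relevant cup product vanishes at the cochain level, so the corresponding component of the defining system can be taken to be a $1$-cocycle, i.e.\ an element of $H^1(G_F)$; the resulting Massey representative $\chi_a\cup\varphi_{b,c}+\varphi_{a,b}\cup\chi_c$ then manifestly lies in $\chi_a\cup H^1(G_F)+H^1(G_F)\cup\chi_c$, which is the indeterminacy coset of \Pref{p0}, so $0\in\langle \chi_a,\chi_b,\chi_c\rangle$.

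Next, suppose $\chi_a,\chi_c$ are both nonzero but linearly dependent, say $\chi_c=\lambda\chi_a$ with $\lambda\in\mathbb{F}_p^\times$. Then $F[a^{1/p}]=F[c^{1/p}]$ is a single cyclic degree-$p$ extension, and by \Pref{split}(3) the Brauer class $\Psi(\langle \chi_a,\chi_b,\chi_c\rangle_\varphi)$ is split by this extension. The classical structure theorem for central simple algebras of prime degree split by a cyclic extension then forces the class to have the form $(a,x)_p$ for some $x\in F^\times$, corresponding to $\chi_a\cup\chi_x\in\chi_a\cup H^1(G_F)$, which lies in the indeterminacy; hence $0\in\langle \chi_a,\chi_b,\chi_c\rangle$.

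Finally, in the remaining case where $\chi_a,\chi_c$ are linearly independent but $\chi_b=\nu\chi_c$ with $\nu\in\mathbb{F}_p^\times$, I use the symmetry $0\in\langle \chi_a,\chi_b,\chi_c\rangle\iff 0\in\langle \chi_c,\chi_b,\chi_a\rangle$. This follows from the anti-involution $\tau(M)=JM^TJ$ of $\mathbb{U}_4(\mathbb{F}_p)$ (with $J$ the anti-diagonal permutation matrix): via \Tref{DW}, any homomorphism $\varphi\colon G_F\to\mathbb{U}_4(\mathbb{F}_p)$ with super-diagonal $(-\chi_a,-\chi_b,-\chi_c)$ yields a new homomorphism $g\mapsto\tau(\varphi(g^{-1}))$ with the super-diagonal reversed, and trilinearity of the Massey product absorbs the resulting overall sign. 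In $\langle \chi_c,\chi_b,\chi_a\rangle$, both pairs $(\chi_c,\chi_a)$ and $(\chi_b,\chi_a)=(\nu\chi_c,\chi_a)$ are linearly independent (the latter since $\chi_c,\chi_a$ are and $\nu\neq 0$), so the reversed triple satisfies the proposition's hypothesis; the assumed case of \Tref{Main} gives $0\in\langle \chi_c,\chi_b,\chi_a\rangle$, and the symmetry transports this back. The main obstacle is this third step: verifying carefully that $\tau$ reverses the super-diagonal while preserving the $(1,4)$-entry (so that it descends to $\bar{\mathbb{U}}_4$), and tracking the signs via trilinearity so that the reversed defining system witnesses exactly $0\in\langle \chi_c,\chi_b,\chi_a\rangle$.
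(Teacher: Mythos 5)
Your proposal is correct and covers all the degenerate configurations, but it reaches the hardest sub-case by a genuinely different route than the paper. Your treatment of the zero classes and of the case $\chi_c=\lambda\chi_a$ agrees in substance with the paper's Case~1: there, too, one restricts to $\K(\chi_a)\cap\K(\chi_c)$, invokes \Pref{split}(3), and uses the cyclic-algebra structure theorem (\cite[Corollary 24.37]{Rowen}) to place the class in $\chi_a\cup H^1(G_F)$. The divergence is in the case $\chi_b=\nu\chi_c$ with $\chi_a,\chi_c$ independent. The paper also reduces to the reversed product $\langle\chi_c,\chi_b,\chi_a\rangle$, but it transports the conclusion back by hand: it introduces explicit correction cochains $\psi_1,\psi_2$ on $G_{ab}$ with $\partial\psi_1=\chi_a\cup\chi_b+\chi_b\cup\chi_a$ and $\partial\psi_2=2\chi_b\cup\chi_b$, uses them to turn a vanishing defining system for the reversed product into a defining system $\varphi'$ for the original one, checks $\res_{\K(\chi_a)}(\langle\chi_a,\chi_b,\chi_c\rangle_{\varphi'})=0$, and concludes once more via Rowen's corollary. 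You instead prove the full reversal symmetry $0\in\langle\chi_a,\chi_b,\chi_c\rangle\iff 0\in\langle\chi_c,\chi_b,\chi_a\rangle$ once and for all, using the anti-automorphism $\tau(M)=JM^TJ$ of $\mathbb{U}_4(\mathbb{F}_p)$ composed with $g\mapsto g^{-1}$ together with \Tref{DW}; your checklist is exactly right: $\tau$ reverses the superdiagonal, fixes the $(1,4)$-entry (hence descends to $\bar{\mathbb{U}}_4$ and carries lifts to lifts), and inversion turns the superdiagonal $(-\chi_c,-\chi_b,-\chi_a)$ into $(\chi_a,\chi_b,\chi_c)$, so what you obtain is a lifted defining system for $\langle-\chi_a,-\chi_b,-\chi_c\rangle$. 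Do state the "trilinearity" precisely as the scaling identity $\langle\lambda_1c_1,\lambda_2c_2,\lambda_3c_3\rangle=\lambda_1\lambda_2\lambda_3\langle c_1,c_2,c_3\rangle$ of cosets (immediate from rescaling defining systems), which with $\lambda_i=-1$ absorbs the sign; also note that in the reversed triple the independence hypotheses really do hold, since $\chi_b=\nu\chi_c$ with $\nu\neq0$ and $\chi_a,\chi_c$ independent. Your route buys conceptual clarity and avoids a second appeal to the Brauer-group splitting criterion at this step; the paper's route stays entirely inside explicit cochain calculus and never needs the unipotent-representation picture beyond what is already quoted.
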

\begin{proof}
Assume we know Theorem \ref{Main} for this case and consider the other cases.\\
{\bf{Case $1.$}}  $\chi_a,\chi_c$ are linearly dependent.\\ Without loss of generality we may assume $\chi_a=t\chi_c$ for some $t\in \mathbb{F}_p$.
It follows that $\K(\chi_c)\subseteq\K(\chi_a)$. Now given a defining system $\varphi=\{\varphi_{a,b},\varphi_{b,c}\}$, we compute,\\
$\res_{\K(\chi_c)}(\langle \chi_a,\chi_b,\chi_c \rangle_{\varphi})=\res_{\K(\chi_c)}(\chi_a)\cup \res_{\K(\chi_c)}(\varphi_{b,c})+ \res_{\K(\chi_c)}(\varphi_{a,b})\cup \res_{\K(\chi_c)}(\chi_c)=0.$\\
By \cite[Corollary 24.37]{Rowen}, we get that $\langle \chi_a,\chi_b,\chi_c \rangle_{\varphi}\in \chi_c\cup H^1(G_F)$, and $0\in \langle \chi_a,\chi_b,\chi_c \rangle.$\\
{\bf{Case $2.$}} $\chi_b,\chi_c$ are linearly dependent. \\ First note that by the first case we may assume $\chi_a,\chi_c$ are linearly independent.
This implies that either $\chi_b=0$ or $\chi_a,\chi_b$ are linearly independent.
If $\chi_b=0$ we see that $\chi_a\cup \chi_b=0; \chi_b\cup \chi_c=0$ in $C^2(G_F)$, so we can choose $\varphi_{a,b}=\varphi_{b,c}=0$ in $C^1(G_F)$ as a defining system and we get $0\in \langle \chi_a,\chi_b,\chi_c \rangle$.\\
The remaining case is $\chi_a,\chi_b$ are linearly independent. Note that by assumption $\langle \chi_a,\chi_b,\chi_c \rangle$ is defined, which implies that $\langle \chi_c,\chi_b,\chi_a\rangle$ is defined. Moreover, by the assumption of the Theorem we have $0\in \langle \chi_c,\chi_b,\chi_a\rangle.$
Let $\varphi=\{\varphi_{c,b}, \varphi_{b,a}\}$ be a defining system for $\langle \chi_c,\chi_b,\chi_a\rangle$ such that $\langle \chi_c,\chi_b,\chi_a\rangle_{\varphi}=0.$ Consider the Galois group $G_{ab}$, of $F[a^{\frac{1}{p}},b^{\frac{1}{p}}]$ which by assumption is $\langle \s_a, \s_b\rangle \cong \mathbb{Z}\times\mathbb{Z}$ with the obvious action. Define two functions, $\psi_1, \psi_2\in C^1(G_{ab})$ by,
$$\psi_1(\s_a^i\s_b^j)=-ij, \ \psi_2(\s_a^i\s_b^j)=-j^2.$$ Note that we have $\res_{\K(\chi_a)}(\psi_1)=0$. A direct computation shows that,
$$\partial(\psi_1)=\chi_a\cup\chi_b+\chi_b\cup\chi_a, \ \  \partial(\psi_2)=2\chi_b\cup\chi_b.$$ Now let $t\in \mathbb{F}_p$ be such that $\chi_c=t\chi_b$ and define $\varphi'=\{\varphi_{a,b}, \varphi_{b,c}\}$ by $$\varphi_{a,b}=\psi_1-\varphi_{b,a}, \ \ \varphi_{b,c}=t\psi_2-\varphi_{c,b}.$$
We compute:\begin{eqnarray}
             \nonumber \partial(\varphi_{a,b}) &=& \partial(\psi_1)-\partial(\varphi_{b,a}) \\
             \nonumber &=& \chi_a\cup\chi_b+\chi_b\cup\chi_a - \chi_b\cup\chi_a=\chi_a\cup\chi_b.
           \end{eqnarray}
\begin{eqnarray}
             \nonumber \partial(\varphi_{b,c}) &=& \partial(t\psi_2)-\partial(\varphi_{c,b}) \\
             \nonumber &=& 2t\chi_b\cup\chi_b-t\chi_b\cup\chi_b=t\chi_b\cup\chi_b=\chi_b\cup\chi_c.
           \end{eqnarray}
Thus we see $\varphi'$ is a defining system for $\langle \chi_a,\chi_b,\chi_c \rangle$.
We compute, \begin{eqnarray}
              \nonumber \langle \chi_a,\chi_b,\chi_c \rangle_{\varphi'} &=& \chi_a\cup(t\psi_2-\varphi_{c,b})+(\psi_1-\varphi_{b,a})\cup\chi_c \\
              \nonumber &=& t\chi_a\cup \psi_2+\psi_1\cup\chi_c-(\chi_a\cup\varphi_{c,b}+\varphi_{b,a}\cup\chi_c)
            \end{eqnarray}
%

 Now, \begin{eqnarray}
        \nonumber \res_{\K(\chi_a)}(\langle \chi_a,\chi_b,\chi_c \rangle_{\varphi^{'}})&=& t\cdot\res_{\K(\chi_a)}(\chi_a)\cup \res_{\K(\chi_a)}(\psi_2) + \\
        \nonumber & & \res_{\K(\chi_a)}(\psi_1)\cup \res_{\K(\chi_a)}(\chi_c) - \\
        \nonumber & & \res_{\K(\chi_a)}(\chi_a)\cup \res_{\K(\chi_a)}(\varphi_{c,b}) - \\
        \nonumber & & \res_{\K(\chi_a)}(\varphi_{b,a})\cup\res_{\K(\chi_a)}(\chi_c) \\
        \nonumber &=& \res_{\K(\chi_a)}(\varphi_{c,b})\cup\res_{\K(\chi_a)}(\chi_a) + \\
        \nonumber & & \res_{\K(\chi_a)}(\chi_c)\cup\res_{\K(\chi_a)}(\varphi_{b,a}) \\
        \nonumber &=& \res_{\K(\chi_a)}(\langle \chi_c,\chi_b,\chi_a\rangle_{\varphi})=0.
      \end{eqnarray}
      Thus by \cite[Corollary 24.37]{Rowen}, we get $$\langle \chi_a,\chi_b,\chi_c \rangle_{\varphi^{'}}\in \chi_a\cup H^1(G_F),$$ and $0\in \langle \chi_a,\chi_b,\chi_c \rangle.$
      \end{proof}

From now we assume $\chi_a,\chi_b,\chi_c$ satisfy $\chi_a,\chi_c$ are linearly independent, and $\chi_b,\chi_c$ are linearly independent.
Applying the previous subsection, we see there is a defining system $\varphi=\{\varphi_{a,b},\varphi_{b,c}\}$ such that
$$\Res_{\K(\chi_{c})}(\varphi_{{b},{c}})=\chi_{w_c}$$ where $$w_c = \prod _{i=0}^{p-1} \sigma_c^i(v)^i\in F_{c}$$
for $v \in F_c$ such that $\N_{F_c/F}(v)=b.$\\

Let $A$ be the corresponding degree $p^2$ and exponent $p$ abelian crossed product as in \ref{ACPR}, we have:
\begin{prop}\label{res}
$[\Res_{F_c}(A)]= [(a,w_c)_{p,F_c}]$
\end{prop}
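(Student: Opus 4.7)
The plan is to combine the two cohomological identities already established---the restriction formula for triple Massey products in \Pref{split}(2) and the explicit computation of $\varphi_{b,c}$ at the end of Section~5---and then transport the result across the Brauer isomorphism $\Psi$. By construction (see \Pref{ACPR}), the class $[A]\in \Br_p(F)$ corresponds to $\langle \chi_a,\chi_b,\chi_c\rangle_\varphi\in H^2(G_F)$ under $\Psi$. Since $\Psi$ is natural with respect to base change, restriction in cohomology corresponds to extension of scalars in the Brauer group, so under $\Psi_{F_c}$ the class $\Res_{\K(\chi_c)}(\langle \chi_a,\chi_b,\chi_c\rangle_\varphi)$ corresponds to $[\Res_{F_c}(A)]$. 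Proving \Pref{res} therefore reduces to computing the restricted cohomology class on the left.

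First I would apply \Pref{split}(2), obtaining
\[
\Res_{\K(\chi_c)}\bigl(\langle \chi_a,\chi_b,\chi_c\rangle_\varphi\bigr)=\chi_a\cup\varphi_{b,c}\in H^2(\K(\chi_c)).
\]
Next I would apply to the hypothesis $\chi_b\cup\chi_c=0$ the construction of Section~5 (playing $(b,c)$ in the roles of $(a,b)$); the corollary closing that section, applied in this way, yields $\Res_{\K(\chi_c)}(\varphi_{b,c})=\chi_{w_c}$, the Kummer character attached to $w_c\in F_c$. Substituting, and using that cup product commutes with restriction, gives
\[
\Res_{\K(\chi_c)}\bigl(\langle \chi_a,\chi_b,\chi_c\rangle_\varphi\bigr)=\Res_{\K(\chi_c)}(\chi_a)\cup\chi_{w_c}.
\]
Under Kummer theory the cup product of Kummer characters $\chi_\alpha\cup\chi_\beta$ corresponds via $\Psi$ to the symbol algebra $(\alpha,\beta)_p$, so the right-hand side translates to $[(a,w_c)_{p,F_c}]\in\Br_p(F_c)$, which is precisely the desired conclusion.

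The only subtlety is the compatibility of $\Psi$ with restriction, but this is a standard feature of the Brauer/cohomology dictionary and creates no real obstruction. The substantive work---namely producing a defining system in which $\varphi_{b,c}$ restricts explicitly to a Kummer character---has already been carried out in Section~5; once that is in place, identifying $[\Res_{F_c}(A)]$ with the symbol $(a,w_c)_{p,F_c}$ collapses to this one-line assembly.
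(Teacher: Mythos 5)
Your argument is correct and coincides with the paper's own proof: both invoke the naturality of $\Psi$ to reduce to the restricted cohomology class, apply Proposition \ref{split}(2) to drop the $\varphi_{a,b}\cup\chi_c$ term, and then substitute $\Res_{\K(\chi_c)}(\varphi_{b,c})=\chi_{w_c}$ from the Section~5 corollary to identify the result with the symbol $(a,w_c)_{p,F_c}$. No meaningful difference in route or emphasis.
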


\begin{proof}
By the naturality of the isomorphism $\Psi$ from the introduction we have that, $[\Res_{F_c}(A)]$ corresponds to
$\Res_{\K_{\chi_c}}(\langle \chi_a,\chi_b,\chi_c \rangle_{\varphi})=\Res_{\K_{\chi_c}}(\chi_a)\cup \Res_{\K_{\chi_c}}(\varphi_{b,c})=
\Res_{\K_{\chi_c}}(\chi_a)\cup \chi_{w_c},$ which in turn corresponds to $[(a,w_c)_{p,F_c}]$.\end{proof}

Note that since $\chi_a\cup \chi_b=0$, we know there exist $u\in F_a$ such that $\N_{F_a/F}(u)=b$, and we let $$w_a=\prod _{i=0}^{p-1} \sigma_a^i(u)^i\in F_{a}.$$
Thus, we can construct the abelian crossed product from section~$2$,
 $$A_{u,v}=\Big(F_aF_c,\{\sigma_a,\sigma_c\}, \{w_c,w_a\}, \frac{v}{u}\Big).$$
Recall this means $A_{u,v}=F_aF_c[z_a,z_c]$ with the following relations:
$$z_akz_a^{-1}=\sigma_a(k), \  z_ckz_c^{-1}=\sigma_c(k),  \ z_a^p=w_c, \ z_c^p=w_a, \ z_cz_a=\frac{v}{u}z_az_c.$$
The next step is to show $[A]$ and $[A_{u,v}]$ only differ by the class of a symbol algebra $(c,s)_{F,p}$ for some $s\in \mul{F}.$
To this end we first notice that:
\begin{prop}
$[\Res_{F_c}(A_{u,v})]=[(a,w_c)_{p,F_c}].$
\end{prop}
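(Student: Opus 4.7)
The strategy is to apply the double centralizer theorem to the subfield $F_c \subset K=F_aF_c \subset A_{u,v}$. Since $F_c$ is a subfield of the $F$-central simple algebra $A_{u,v}$, there is a classical Brauer-equivalence
$$[A_{u,v}\otimes_F F_c] = [\C_{A_{u,v}}(F_c)] \in \Br(F_c),$$
so the task reduces to identifying the centralizer $\C_{A_{u,v}}(F_c)$ with the symbol algebra $(a,w_c)_{p,F_c}$.

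To compute this centralizer I would write a general element of $A_{u,v}$ in the form
$$\sum_{i,j=0}^{p-1} k_{i,j}\,z_a^i z_c^j,\qquad k_{i,j}\in K,$$
and use that $F_c$ is generated over $F$ by $x_c$. The relevant identities are: $x_c$ commutes with every element of $K$; $z_a x_c = x_c z_a$, because $\sigma_a$ fixes $F_c$; and $z_c x_c = \rho x_c z_c$, from $z_c x_c z_c^{-1}=\sigma_c(x_c)=\rho x_c$. Passing $x_c$ through $z_a^i z_c^j$ from the right produces the factor $\rho^j$, so the centralizing condition becomes $(1-\rho^j)k_{i,j}=0$, forcing $k_{i,j}=0$ whenever $j\not\equiv 0\pmod p$. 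Hence
$$\C_{A_{u,v}}(F_c)=K[z_a],$$
an $F_c$-subalgebra of $F$-dimension $p^3$, in agreement with the expected $[A_{u,v}:F]/[F_c:F]=p^3$.

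Next I identify $K[z_a]$ as an $F_c$-algebra. Because $\chi_a$ and $\chi_c$ are linearly independent, $F_a \cap F_c = F$, so $K = F_c[x_a]$ with $x_a^p = a$. Combining this with the relations $z_a^p = w_c \in F_c$ and $z_a x_a = \rho x_a z_a$ exhibits $K[z_a]$ as the standard presentation of the symbol $F_c$-algebra $(a, w_c)_{p, F_c}$. The double centralizer theorem then yields
$$[\Res_{F_c}(A_{u,v})] = [\C_{A_{u,v}}(F_c)] = [(a, w_c)_{p,F_c}]$$
in $\Br(F_c)$, as required.

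The main step requiring care is the centralizer computation: one must track which generators of $A_{u,v}$ act trivially on $F_c$ (namely $K$ and $z_a$) versus which do not ($z_c$), so that the ``$z_c$-free'' part of $A_{u,v}$ is cleanly isolated. Once the centralizer is pinned down, the reduction to a symbol algebra and the passage to Brauer classes are essentially formal.
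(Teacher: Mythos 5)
Your proof is correct and follows the same route as the paper: the paper likewise invokes the centralizer description of restriction (citing Rowen, Corollary 24.24) and identifies $\C_{A_{u,v}}(F_c)=F_aF_c[z_a]=(a,w_c)_{p,F_c}$. You merely carry out explicitly the centralizer computation and dimension check that the paper leaves implicit.
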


\begin{proof}

By \cite[Corollary 24.24]{Rowen} we have $[\Res_{F_c}(A_{u,v})]=[\C_{A_{u,v}}(F_c)]=$\\ $[F_aF_c[z_a]]=[(a,w_c)_{p,F_c}]$ as needed.
\end{proof}

\begin{cor}\label{x}
We have: $[A_{u,v}]=[A]\otimes [(c, s)_{p,F}]$ for some $s\in \mul{F}.$
\end{cor}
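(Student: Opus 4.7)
The plan is to deduce this corollary by comparing the restrictions of $A$ and $A_{u,v}$ to $F_c$ and invoking the classical description of the kernel of restriction along a cyclic Kummer extension. First, from \Pref{res} and the proposition immediately above,
\[
[\Res_{F_c}(A)] = [(a, w_c)_{p, F_c}] = [\Res_{F_c}(A_{u,v})]
\]
in $\Br(F_c)$, so the class $[A_{u,v}] \otimes [A]^{-1}$ lies in $\K\bigl(\Res: \Br_p(F) \to \Br_p(F_c)\bigr)$.

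The key step is then to invoke the classical fact that, since $F$ contains a primitive $p$-th root of unity and $F_c = F(c^{1/p})$ is a cyclic Kummer extension of $F$ of degree $p$, this restriction kernel is exactly
\[
\bigl\{[(c, s)_{p, F}] : s \in \mul{F}\bigr\}.
\]
This is a standard consequence of the inflation--restriction sequence in Galois cohomology applied to $1 \to G_{F_c} \to G_F \to \Gal(F_c/F) \to 1$, or equivalently of the cyclic crossed product description of the relative Brauer group. Applying this description to $[A_{u,v}] \otimes [A]^{-1}$ produces the required $s \in \mul{F}$ with $[A_{u,v}] = [A] \otimes [(c,s)_{p,F}]$.

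The main (and really only) point requiring verification is that both $[A]$ and $[A_{u,v}]$ already lie in $\Br_p(F)$, so that the kernel description in $\Br_p(F)$ applies. For $[A]$ this is immediate because it is the image under the isomorphism $\Psi$ of a class in $H^2(G_F, \Z/p\Z)$. For $[A_{u,v}]$ it follows either from the decomposition in \Tref{MTB}, which presents it as a tensor product of two degree-$p$ symbol algebras, or directly from its $\Z/p\Z \times \Z/p\Z$ abelian crossed product structure. I do not anticipate any substantive obstacle; the proof is a one-paragraph assembly of the two preceding propositions together with one classical Brauer-theoretic input.
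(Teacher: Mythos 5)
Your proof is correct and takes essentially the same route as the paper: both restrictions to $F_c$ equal $[(a,w_c)_{p,F_c}]$, so $[A_{u,v}]\otimes[A]^{-1}$ is split by $F_c$, and the relative Brauer group of the degree-$p$ Kummer extension $F_c/F$ consists exactly of the symbol classes $[(c,s)_{p,F}]$ (the paper cites [Rowen, Corollary 24.37] for this last fact). The $p$-torsion verification you flag is not actually needed, since $\Br(F_c/F)$ is automatically $p$-torsion when $[F_c:F]=p$.
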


\begin{proof}
By \cite[Corollary 24.37]{Rowen} it is enough to show that $[A_{u,v}]\otimes [A]^{-1}$ is split by $F_c.$
But from the above we have $\Res_{F_c}([A_{u,v}]\otimes [A]^{-1})=[(a,w_c)_{p,F_c}]\otimes [(a,w_c)_{p,F_c}]^{-1}=1.$
\end{proof}

Corollary \ref{x} tells us that:

\begin{prop}\label{B}
The abelian crossed product $A_{u,v}$ represents an element of $\langle \chi_a,\chi_b,\chi_c \rangle$.
\end{prop}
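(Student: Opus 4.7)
The plan is to combine Corollary \ref{x} with the coset description of the triple Massey product recorded in Remark \ref{Set}. By construction, the class $[A]\in \Br_p(F)$ corresponds under $\Psi$ to the distinguished representative $\langle \chi_a,\chi_b,\chi_c\rangle_\varphi$ of the Massey product, while Corollary \ref{x} asserts that $[A_{u,v}]=[A]\otimes[(c,s)_{p,F}]$ for some $s\in F^{\times}$. Passing this equation through the additive isomorphism $\Psi^{-1}$ reduces the claim to showing that $\Psi^{-1}([(c,s)_{p,F}])$ lies in the indeterminacy subgroup $\chi_a\cup H^1(G_F)+H^1(G_F)\cup\chi_c\subseteq H^2(G_F)$.

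This last step is immediate from the standard identification of cyclic symbol algebras with cup products of Kummer characters: $\Psi^{-1}([(c,s)_{p,F}])=\chi_c\cup\chi_s$, where $\chi_s\in H^1(G_F)$ is the Kummer character attached to $s\in F^{\times}$. Using graded-commutativity of the cup product on $H^1$, one rewrites $\chi_c\cup\chi_s=-\chi_s\cup\chi_c=\chi_{s^{-1}}\cup\chi_c$, placing this class in $H^1(G_F)\cup\chi_c$. Hence
\[
\Psi^{-1}([A_{u,v}])\;=\;\langle \chi_a,\chi_b,\chi_c\rangle_\varphi+\chi_{s^{-1}}\cup\chi_c,
\]
and Remark \ref{Set} identifies the right-hand side as an element of $\langle \chi_a,\chi_b,\chi_c\rangle$, as desired.

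I do not anticipate any genuine obstacle: the substantive work was already carried out in Proposition \ref{ACPR} (to guarantee that the Massey product is represented by \emph{some} abelian crossed product) and in Corollary \ref{x} (to compare $A_{u,v}$ with the specific crossed product $A$ representing $\langle\chi_a,\chi_b,\chi_c\rangle_\varphi$). The only bookkeeping point worth flagging is the convention relating $(c,s)_p$ to $\chi_c\cup\chi_s$ and the sign appearing in graded-commutativity; neither affects the conclusion, since the relevant subgroup $H^1(G_F)\cup\chi_c$ is closed under negation.
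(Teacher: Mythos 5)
Your proposal is correct and follows exactly the route the paper intends: the paper's own proof simply states that the claim is clear from Corollary \ref{x} and Remark \ref{Set}, and you have supplied the elided details (identifying $[(c,s)_{p,F}]$ with $\chi_c\cup\chi_s$ and using graded-commutativity to place it in the indeterminacy coset $H^1(G_F)\cup\chi_c$). No issues.
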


\begin{proof}
This is clear in light of Corollary \ref{x} and Remark \ref{Set}.
\end{proof}

We can finally prove:
\begin{thm}\label{Main}
For any prime $p$ and any field $F$ containing a primitive $p$-th root of unity, $G_F$ has the Vanishing triple Massey product property.
\end{thm}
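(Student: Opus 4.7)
The plan is to assemble the machinery of Sections 3--5 to exhibit inside any defined $\langle \chi_a,\chi_b,\chi_c \rangle$ a representative whose Brauer class has the form $[(a,\alpha)_p]\otimes[(c,\beta)_p]$, whereupon (\ref{cond}) and Proposition \ref{p0} immediately yield $0\in \langle \chi_a,\chi_b,\chi_c \rangle$. First I would invoke the reduction proved earlier in this section so that both pairs $\chi_a,\chi_c$ and $\chi_b,\chi_c$ are linearly independent. Then, using $\chi_a\cup\chi_b=0=\chi_b\cup\chi_c$ together with Wedderburn, I would produce $u\in F_a$ and $v\in F_c$ with $\N_{F_a/F}(u)=b=\N_{F_c/F}(v)$, and form $w_a=\prod_{i=0}^{p-1}\sigma_a^i(u)^i$ and $w_c=\prod_{i=0}^{p-1}\sigma_c^i(v)^i$. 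The equality of norms is precisely hypothesis (\ref{eq1}), so Proposition \ref{example1} yields the abelian crossed product $A_{u,v}$. In parallel, the explicit construction of Section 5 supplies a defining system $\varphi=\{\varphi_{a,b},\varphi_{b,c}\}$ with $\Res_{F_c}(\varphi_{b,c})=\chi_{w_c}$, and Proposition \ref{ACPR} produces a $\mathbb{Z}/p\mathbb{Z}\times\mathbb{Z}/p\mathbb{Z}$ abelian crossed product $A$ representing $\langle \chi_a,\chi_b,\chi_c \rangle_\varphi$ under $\Psi$.

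The next step is to compare $[A]$ and $[A_{u,v}]$ in $\Br(F)$. Restricted to $F_c$, the first becomes $[(a,w_c)_{p,F_c}]$ by Proposition \ref{res}, while the second becomes the same class via the Double Centralizer computation $\C_{A_{u,v}}(F_c)=F_aF_c[z_a]$. Hence $[A]\otimes [A_{u,v}]^{-1}$ is split by $F_c$, and by the standard splitting criterion is of the form $[(c,s)_p]$ for some $s\in F^{\times}$. Under $\Psi$ this class lies in $H^1(G_F)\cup \chi_c$, and Remark \ref{Set} says precisely that modifying $\langle \chi_a,\chi_b,\chi_c \rangle_\varphi$ by such a term keeps us inside the coset $\langle \chi_a,\chi_b,\chi_c \rangle$. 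Therefore $A_{u,v}$ itself represents some element of the Massey coset.

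Finally, applying Theorem \ref{MTB} to $A_{u,v}$ yields
\[
A_{u,v}\cong (a,W^{-p})_p\otimes (c,W^pZ^p)_p,
\]
which is exactly of the form $(a,\alpha)_p\otimes (c,\beta)_p$ demanded by (\ref{cond}), so Proposition \ref{p0} gives $0\in \langle \chi_a,\chi_b,\chi_c \rangle$. I expect the main obstacle to be the Brauer-theoretic bookkeeping in the middle step: one must keep careful track of the distinction between the fixed representative $\langle \chi_a,\chi_b,\chi_c\rangle_\varphi$ and the full Massey coset, and verify that the symbol class $[(c,s)_p]$ produced by the $F_c$-splitting argument corresponds under $\Psi$ to an allowed change of defining system. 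Once this is checked, the rest is just the symbol-algebra identification furnished by Theorem \ref{MTB}.
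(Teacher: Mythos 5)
Your proposal is correct and follows essentially the same route as the paper: reduce to the linearly independent case, use the Section 5 defining system to identify $\Res_{F_c}$ of both $A$ and $A_{u,v}$ with $[(a,w_c)_{p,F_c}]$, conclude that they differ by a symbol $(c,s)_p$ (an allowed modification by Remark \ref{Set}), and finish with the decomposition of Theorem \ref{MTB}. This is precisely the chain Proposition \ref{res} $\to$ Corollary \ref{x} $\to$ Proposition \ref{B} $\to$ Theorem \ref{MTB} in the paper.
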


\begin{proof}
By Remark \ref{Set}, it is enough to show that there exist an element $\langle \chi_a,\chi_b,\chi_c \rangle_{\varphi}\in \langle \chi_a,\chi_b,\chi_c \rangle$
such that $\langle \chi_a,\chi_b,\chi_c \rangle_{\varphi}\in \chi_a\cup H^1(G_F) + H^1(G_F)\cup \chi_c$.
Now by Proposition \ref{B}, we know the abelian crossed product $A_{u,v}$ represents some element
$\langle \chi_a,\chi_b,\chi_c \rangle_{\varphi}\in \langle \chi_a,\chi_b,\chi_c \rangle$.
Thus it is enough to show that $A_{u,v}\cong (a,s)_{F,p}\otimes (c,t)_{F,p}$ for some $s,t\in \mul{F}$, but this is exactly Theorem \ref{MTB}.
\end{proof}

\end{document}